\theoremstyle{definition}
\newtheorem{notation}[subsection]{Notation}
\theoremstyle{theorem}
\newtheorem{thmA}{Theorem}
\newtheorem{theorem}{Theorem}[section]
\newtheorem{lem}[theorem]{Lemma}
\newtheorem{prop}[theorem]{Proposition}
\theoremstyle{definition}
\newtheorem{example}[theorem]{Example}
\theoremstyle{remark}
\newcommand{\La}{\Lambda}
\newcommand{\vx}{\operatorname{vx}}
\newcommand{\w}{\widetilde}
\newcommand{\wh}{\widehat}
\renewcommand{\o}{\overline}
\newcommand{\la}{\lambda}
\newcommand{\Cl}{\operatorname{cc}}
\newcommand{\Bl}{\operatorname{Bl}}
\newcommand{\Irr}{\operatorname {Irr}}
\newcommand{\Syl}{\operatorname {Syl}}
\newcommand{\Cent}{\operatorname C}
\newcommand{\Z}{\operatorname Z}
\newcommand{\bl}{\operatorname{bl}}
\newcommand{\NNN}{\operatorname {N}}
\newcommand{\TT}{\mathbb T}
\newcommand{\IBr}{\operatorname {IBr}}
\newcommand{\CC}{\mathbb C}
\def\spann<#1>{\left\langle#1\right\rangle}
\newcommand{\calO}{\mathcal O}
\newcommand{\gmid}{\,\Big|\,}
\newcommand{\phip}{{\phi'}}
\newcommand{\wphip}{{\w\phi'}}
\newcommand{\wphi}{{\w\phi}}
\newcommand{\wchi}{{\w\chi}}
\newcommand{\chip}{{\chi'}}
\newcommand{\wchip}{{\w\chi'}}
\newcommand{\thetap}{{\theta'}}
\newcommand{\wthetap}{{\w\theta'}}
\newcommand{\wtheta}{{\w\theta}}
\renewcommand{\labelenumi}{(\alph{enumi})}
\renewcommand{\theenumi}{\thesubsection{}(\alph{enumi})}
\newcommand{\enumalph}{\renewcommand{\labelenumi}{(\alph{enumi})} \renewcommand{\theenumi}{\thesubsection(\alph{enumi})}}
\renewcommand{\theenumi}{\thesubsection(\alph{enumi})}
\numberwithin{equation}{section}
\title{Clifford theory of characters in induced blocks}
\author[]{ Shigeo Koshitani and Britta Sp\"ath}
\thanks{
2010 {\it Mathematics Subject Classification.} 
Primary 20C20; Secondary 20C15.
\newline\indent
{\it Key words and phrases.} Clifford theory, induced block, 
Dade's group $G[b]$, Harris-Kn{\"o}rr.
\newline\indent
The first author was supported by the Japan Society for Promotion
of Science (JSPS), Grant-in-Aid for Scientific Research (C)23540007,
2011--2014.
The second author has been supported by the Deutsche 
Forschungsgemeinschaft, SPP 1388 and by the ERC Advanced Grant 291512.}
\address{Department of Mathematics, Graduate School of Science, 
Chiba University, 1-33 Yayoi-cho, Inage-ku,
Chiba, 263-8522, Japan}
\email{koshitan@math.s.chiba-u.ac.jp}
\address{FB Mathematik, TU Kaiserslautern, Postfach 3049, 
67653 Kaiserslautern, Germany.}
\email{spaeth@mathematik.uni-kl.de}
\dedicatory{\normalsize
 Dedicated to the memory of Masafumi Murai}
\date{\today}
\begin{document}
\setpagewiselinenumbers
\modulolinenumbers[5]
\linenumbers

\begin{abstract}
We present a new criterion to predict if a character of a finite 
group extends. 
Let $G$ be a finite group and $p$ a prime.
For $N\lhd G$, we consider $p$-blocks $b$ and $b'$ 
of $N$ and $\NNN_N(D)$, respectively, with $(b')^N=b$, 
where $D$ is a defect group of $b'$.
Under the assumption that $G$ coincides with a normal subgroup $G[b]$
of $G$, which was introduced by Dade early 1970's,
we give a character correspondence 
between the sets of all irreducible constituents 
of $\phi^G$ and those of $(\phi')^{\NNN_G(D)}$ 
where $\phi$ and $\phi'$ are irreducible Brauer characters in $b$
and $b'$, respectively. 
This implies a sort of generalization of the theorem of Harris-Kn\"orr.
An important tool is the existence of certain extensions that also
helps in checking the inductive Alperin-McKay and 
inductive Blockwise Alperin Weight conditions, due to the second author.
\end{abstract}
\maketitle

{\small{\tableofcontents{}}}

\section{Introduction}

It is a well-known and difficult problem 
to predict the Clifford theory of characters of finite groups, 
although there are numbers of well-known criteria 
that involve finite group theory of the underlying groups. 
Here we give a different approach that makes assumptions 
on associated blocks and their properties. 
The work of \cite{NavarroSpaeth} suggests that 
the Clifford theories of height zero characters 
in Brauer corresponding blocks should have some similarities. 
More precisely the notion of block-isomorphic-character-triples 
has been introduced there, and it is conjectured that 
there is a bijection between height zero characters of blocks 
such that associated character triples are block isomorphic, 
since it is consequence of Theorem 7.1 of \cite{NavarroSpaeth} that assumes the inductive AM condition. 
We do not investigate block-isomorphic-character-triples here in detail, 
since it is a quite technical concept, but we give some evidence 
for this and an analogue for Brauer characters in a limited situation. 

This situation turns out in \cite{KoshitaniSpaeth13} 
to be important in checking 
the inductive Alperin-McKay 
(inductive AM) 
and inductive Blockwise-Alperin-Weight 
(inductive BAW) 
conditions for blocks with cyclic defect groups. 

Throughout this paper $G$ is always a finite group, $p$ is a prime,
and a block is a $p$-block.
For $N\lhd G$ and a block $b$ of $N$, Dade 
introduced in \cite{DadeBlockExtensions} a normal subgroup $G[b]$ of the stabilizer $G_b$ of $b$ in $G$, and established its remarkable properties. 
Later Murai gave a character theoretic interpretation of this group 
in \cite{Murai_Dade}. We analyse the situation where $G=G[b]$. 
For an irreducible character $\w\chi$ of $G$ we investigate the block of $G$ the character $\w\chi$ belongs to, and we denote it by $\bl(\w\chi)$. 
We write $H \leq G$ if $H$ is a subgroup of $G$.
If $H\leq G$ and $B'$ is a block of $H$,
we denote by $(B')^G$ the induced block (block induction) 
whenever it is defined. 
Let $\IBr(G)$ be the set of all irreducible Brauer characters 
of $G$, $\IBr(b)$ the set of all irreducible Brauer characters 
of a block $b$ of $N$
and for $\phi\in\IBr(N)$ we denote by $\IBr(G\mid \phi)$ 
the set of all irreducible constituents of the induced character $\phi^G$. 

\begin{thmA}\label{thmA}
Let $G$ be a finite group, and
let $N\lhd G$, $H\leq G$ and $M:=N\cap H$. 
Let $b'\in\Bl(M)$ be a block of $M$ that has a defect group 
$D$ with $\Cent_G(D)\subseteq H$. Assume that $G=G[b]$ for 
$b:=(b')^N$, where $G[b]$ is the group as defined in Notation \ref{Def_G[b]}. 
Then for every $\phi \in\IBr(b)$ and every $\phip\in\IBr(b')$ 
there is a bijection 
 \[ \Lambda: \IBr(G\mid \phi)\longrightarrow \IBr(H\mid \phip),\]
 such that $\bl(\Lambda(\rho))^G=\bl(\rho)$ 
for every $\rho\in\IBr(G\mid \phi)$. 
Further $\rho \in \IBr(G)$ is an extension of $\phi$ 
if and only if $\Lambda(\rho)$ is an extension of $\phip$.
\end{thmA}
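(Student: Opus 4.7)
The strategy is to combine Clifford correspondence for Brauer characters with the ``existence of certain extensions'' advertised in the abstract, and then to apply Gallagher's theorem. The overall plan is as follows.

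First I would reduce the problem to the level of Clifford stabilizers. Setting $T:=G_\phi$ and $S:=H_{\phip}$, induction yields bijections $\IBr(T\mid\phi)\to\IBr(G\mid\phi)$ and $\IBr(S\mid\phip)\to\IBr(H\mid\phip)$, and under the hypothesis $\Cent_G(D)\subseteq H$ block induction is defined along $S\subseteq T\subseteq G$ and $S\subseteq H\subseteq G$, so by transitivity of block induction the conclusion of the theorem is equivalent to its analogue at the stabilizer level. In parallel I would aim to establish the subgroup relations $S\leq T$, $T=NS$, and $N\cap S=M$ by a Frattini-type argument exploiting $G=G[b]$ together with $\Cent_G(D)\subseteq H$; this produces the crucial isomorphism $S/M\stackrel{\sim}{\to}T/N$ on which the Gallagher-transport step will rest.

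Next, the hypothesis $G=G[b]$, applied through the Dade--Murai theory introduced in Notation \ref{Def_G[b]}, would be used to produce an extension $\wphi\in\IBr(T)$ of $\phi$. The parallel machinery applied to $M\lhd H$ and $b'$ would yield an extension $\wphip\in\IBr(S)$ of $\phip$. Crucially, one must choose these extensions so that their blocks correspond under induction, $\bl(\wphip)^T=\bl(\wphi)$. With such compatible extensions in hand, Gallagher's theorem for Brauer characters writes $\IBr(T\mid\phi)=\{\wphi\cdot\psi:\psi\in\IBr(T/N)\}$ and $\IBr(S\mid\phip)=\{\wphip\cdot\eta:\eta\in\IBr(S/M)\}$. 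Defining $\Lambda_0(\wphi\cdot\psi):=\wphip\cdot\bar\psi$ under the identification $\IBr(T/N)\cong\IBr(S/M)$, and then transporting via Clifford induction, gives $\Lambda$. The block compatibility $\bl(\Lambda(\rho))^G=\bl(\rho)$ follows from $\bl(\wphip\cdot\bar\psi)^T=\bl(\wphi\cdot\psi)$, which in turn reduces to the compatibility built into the extensions together with the standard behavior of blocks under multiplication by characters of $p$-regular quotients. The extension-preservation assertion is immediate: $\wphi\cdot\psi$ extends $\phi$ exactly when $\psi(1)=1$, and the isomorphism $\IBr(T/N)\cong\IBr(S/M)$ preserves degrees.

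The main obstacle is producing extensions $\wphi$ and $\wphip$ whose blocks correspond under block induction simultaneously; individual one-sided extensions should fall out of the $G[b]$-theory, but synchronising them through Harris--Kn\"orr-type compatibility is where the real work lies and is presumably the content of the earlier technical preparation alluded to in the abstract. A secondary, but still nontrivial, issue is the subgroup decomposition $T=NS$ with $N\cap S=M$ underpinning the Gallagher-transport step; this is likely forced by $G=G[b]$ combined with $\Cent_G(D)\subseteq H$, but does not follow from the bare hypotheses without a careful examination of how $G[b]$ controls the action of $G/N$ on $\IBr(b)$.
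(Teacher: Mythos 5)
Your overall architecture (Gallagher parametrization of $\IBr(\cdot\mid\phi)$, transport along an isomorphism of quotients, block compatibility inherited from compatible extensions) is the right shape, but there are two genuine gaps, one of which is fatal as stated.

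First, the extensions you need do not exist in general. The hypothesis $G=G[b]$ gives, via Lemma \ref{KueWatanabe}(a), that every $\phi\in\IBr(b)$ is $G$-invariant — so your stabilizer reduction is actually vacuous ($T=G_\phi=G$ and, by Lemma \ref{prop_cor3_3}, $S=H_{\phip}=H$, with $G=NH$ from Lemma \ref{KueWatanabe}(c)) — but invariance does not give extendibility: there is a cohomological obstruction, and $G[b]=G$ does not kill it. Indeed the final clause of the theorem ("$\rho$ extends $\phi$ iff $\Lambda(\rho)$ extends $\phip$") would be contentless if extensions always existed; and Theorem \ref{thm1_3}(c) takes extendibility of one side as a \emph{hypothesis} precisely because it is not automatic. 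The paper's proof handles this by replacing $G$ with a central extension $\wh G$ (with cyclic central $p'$-kernel $Z$, via the construction in Theorem 4.1 of \cite{NavarroSpaeth}) in which the transported character $\phi_0$ \emph{does} extend; Theorem \ref{thm1_3}(c)(2) is then applied upstairs to produce the compatible extension on $\wh H$, the Gallagher/block argument is carried out in $\wh G$, and one descends to $G$ by sorting the characters of $\IBr(\wh G\mid\phi_0)$ according to their central character on $Z$. Without some such device your construction of $\wphi$ and $\wphip$ has no starting point.

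Second, even granting compatible extensions, the step $\bl(\wphip\cdot\bar\psi)^{T}=\bl(\wphi\cdot\psi)$ for \emph{all} Gallagher twists $\psi$ does not follow from the single equality $\bl(\wphip)^{T}=\bl(\wphi)$ plus "standard behavior of blocks under multiplication." What is actually needed is the stronger local condition $(**)$ of Theorem \ref{thm1_3} — block correspondence of the restrictions at every intermediate subgroup $N\leq J\leq G$ — which is exactly the input to Proposition 3.6 of \cite{Spaeth_red_BAW} used in the paper to propagate block induction across all products $\wphi\eta\mapsto\wphip\eta_{H}$. You correctly flag the "synchronization" of the two extensions as the hard point, but the synchronization required is this family of local block identities, not merely agreement of the two top-level blocks.
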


A similar related statement in the context 
of Green correspondence is given in \cite{Dade84}.
Note also that here the defect group of $b'$ and $b$ might differ. 
In the case where the defect groups coincide the statement follows from 
Section 12 of \cite{DadeBlockExtensions}. 
Theorem \ref{thmA} is proven by 
constructing various extensions of characters 
that belong to specific blocks. As a consequence 
we obtain a sort of generalization of the Theorem of Harris-Kn\"orr
\cite{HarrisKnoerr}.

\begin{thmA}\label{thm_gen_HK}
Let $G$ be a finite group, and
let $N\lhd G$, $H\leq G$ and $M:= N\cap H$. 
Let $b'\in\Bl(M)$ be a block of $M$ that has a defect group $D$ 
with $\Cent_G(D)\subseteq H$. For $b:=(b')^N$ 
the map from $\Bl(H\mid b')$ to $\Bl(G\mid b)$ 
given by $B'\mapsto (B')^G$ is well-defined and surjective,
where $\Bl(G\mid b)$ is the set of all blocks of $G$ covering $b$.
\end{thmA}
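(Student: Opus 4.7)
My plan is to derive the statement from Theorem~\ref{thmA} by reducing to the case $G=G[b]$, in which Theorem~\ref{thmA} directly applies. The well-definedness of $B'\mapsto (B')^G$ is a standard block-theoretic computation; the surjectivity reduces to Theorem~\ref{thmA} once one passes from $G$ to $G[b]$.

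For well-definedness, I would first show that $(B')^G$ is defined for any $B'\in\Bl(H\mid b')$: since $B'$ covers $b'$, one can choose a defect group $Q'$ of $B'$ containing $D$, and then $\Cent_G(Q')\subseteq\Cent_G(D)\subseteq H$ ensures that the induced block exists. To verify $(B')^G\in\Bl(G\mid b)$, I would invoke the classical compatibility of block induction with covering: if $B'\in\Bl(H)$ covers $b'\in\Bl(M)$ and both $(B')^G$ and $(b')^N=b$ are defined, then $(B')^G$ covers $b$. This is typically shown by transitivity of block induction through the intermediate subgroup $NH$.

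For surjectivity in the case $G=G[b]$, Theorem~\ref{thmA} yields the result immediately. Given $B\in\Bl(G\mid b)$, pick any $\phi\in\IBr(b)$ and any $\rho\in\IBr(B)\cap\IBr(G\mid\phi)$; this set is non-empty since $B$ covers $b$. Fix any $\phi'\in\IBr(b')$ and set $B':=\bl(\Lambda(\rho))$, where $\Lambda$ is the bijection of Theorem~\ref{thmA}. Since $\Lambda(\rho)\in\IBr(H\mid\phi')$ lies above $\phi'\in\IBr(b')$, the block $B'$ covers $b'$, hence $B'\in\Bl(H\mid b')$; and $(B')^G=\bl(\Lambda(\rho))^G=\bl(\rho)=B$ by the block compatibility built into Theorem~\ref{thmA}.

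For general $G$ I would pass to $\tilde G:=G[b]$ and $\tilde H:=H\cap G[b]$. Since $M\subseteq N\subseteq\tilde G$, one has $\tilde H\cap N=M$, the condition $\Cent_{\tilde G}(D)\subseteq\tilde H$ is inherited from $G$, and $\tilde G=\tilde G[b]$ by the self-referential property of Dade's construction. Hence the previous paragraph applies inside $\tilde G$, giving surjectivity of $\Bl(\tilde H\mid b')\to\Bl(\tilde G\mid b)$. The main obstacle is then lifting this from $\tilde G$ up to $G$: for a given $B\in\Bl(G\mid b)$, one identifies $\tilde B\in\Bl(\tilde G\mid b)$ covered by $B$, writes $\tilde B=(\tilde B')^{\tilde G}$ for some $\tilde B'\in\Bl(\tilde H\mid b')$, and must then produce $B'\in\Bl(H\mid b')$ covering $\tilde B'$ with $(B')^G=B$. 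This last verification, combining Fong--Reynolds (to reduce first to $G=G_b$), the control of $\Bl(G_b\mid b)$ afforded by the Dade group $G[b]$, and transitivity of block induction, is the technically delicate step; the remaining ingredients (well-definedness and the case $G=G[b]$) are then routine.
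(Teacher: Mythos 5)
Your strategy is the same as the paper's: Fong--Reynolds to reduce to $G_b=G$, the case $G=G[b]$ via Theorem \ref{thmA} (this is exactly Lemma \ref{prop4_5} in the paper), and well-definedness via the defect-group/admissibility argument plus the compatibility of block induction with covering (Lemma \ref{lem2_5}). Those parts of your proposal are correct. The problem is that the step you yourself flag as ``the technically delicate step'' --- lifting surjectivity from $\w G=G[b]$ back to $G$ --- is the only genuinely non-routine point of the whole theorem, and you do not carry it out; as written, the proposal therefore has a gap precisely where the proof has to do its work.

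The ingredient you are missing is Dade's Theorem 3.5 of [Dad73] (see also Theorem 3.5 of [Mur13]): for the $G$-invariant block $b$ (after Fong--Reynolds one may assume $G_b=G$), block induction gives a \emph{bijection} $\Bl(G[b]\mid b)\to\Bl(G\mid b)$, and each $B\in\Bl(G\mid b)$ covers a unique $\w B\in\Bl(G[b]\mid b)$, which satisfies $(\w B)^G=B$. With this, the lifting is short: given $B$, take this unique $\w B$, write $\w B=(\w B')^{G[b]}$ with $\w B'\in\Bl(H\cap G[b]\mid b')$ using the already-established case $G=G[b]$, and choose any $B'\in\Bl(H\mid \w B')\subseteq\Bl(H\mid b')$. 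Applying Lemma \ref{lem2_5} with $G[b]$ in the role of the normal subgroup (so that the relevant intersection is $H\cap G[b]$), the block $(B')^G$ is defined and covers $(\w B')^{G[b]}=\w B$; the uniqueness in Dade's theorem then forces $(B')^G=B$. So the step is two lines once Dade's bijection is invoked, but without naming that result (or an equivalent control of $\Bl(G\mid b)$ by $G[b]$) your outline does not close; in particular, merely knowing that $B$ covers \emph{some} $\w B\in\Bl(\w G\mid b)$ is not enough --- one needs $(\w B)^G=B$ and the uniqueness of the covered block to identify $(B')^G$ with $B$.
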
 

Actually this work grew out of investigations on the inductive 
blockwise Alperin weight conditions 
(inductive BAW conditions) 
and the inductive Alperin-McKay conditions 
(inductive AM conditions) 
for blocks with cyclic defect groups that lead to \cite{KoshitaniSpaeth13}. 
Those inductive conditions are defined 
in the context of the reduction theorems 
of the Alperin-McKay and Alperin weight conjecture.
Theorem \ref{thm1_3} is a powerful tool in the investigations. 
We suspect our main results here should simplify the 
checking of the inductive conditions in general and hope 
that it gives new insights why they should hold. 
When $B$ is a block of $G$, we write $\Irr(G)$ and $\Irr(B)$
for the sets of irreducible ordinary characters of $G$ and
those which are in $B$, respectively.

\begin{thmA}\label{thm1_3}
Let $G$ be a finite group, and let $N\lhd G$, $H \leq G$ and
$M:= N\cap H$. Let $b' \in\Bl(M)$ be a block of $M$ with defect
group $D$ such that $\Cent_G(D)\subseteq H$, and let $b := (b')^N$.
Assume further that $G = G[b]$.
\begin{enumerate}
\item 
{\sf (Ordinary characters)}
  \begin{enumerate}
  \item[(1)]  
If $\chi' \in\Irr(b')$ extends to a character $\w\chi' \in\Irr(H)$,
then there exists a character $\chi\in\Irr(b)$ of height zero
which extends to a character $\w\chi\in\Irr(G)$ and which satisfies
\[ 
(*) \qquad
\bl\Big( (\wchip)_{J\cap H}\Big)^{J}=\bl(\w\chi_J)\text{ for every }J 
\text{ with }N\leq J\leq G.
\]
  \item[(2)]
If $\chi\in\Irr(b)$ extends to a character $\w\chi\in\Irr(G)$, then
there exists a character $\chi'\in\Irr(b')$ of height zero
which extends to a character $\w\chi'\in\Irr(H)$ and which satisfies $(*)$.
   \end{enumerate}
\item 
{\sf (Sylow $p$-subgroups)}
   \begin{enumerate}
   \item[(1)]
If $\chi'\in\Irr(b')$ extends to a character $\w\chi'\in\Irr(H)$
and if $\chi\in\Irr(b)$ extends to a subgroup $J_0$ of $G$
with $N \leq J_0 \leq G$ and $J_0/N \in\Syl_p(G/N)$,
then $\chi$ extends to a character $\w\chi\in\Irr(G)$ which satisfies 
$(*)$.
   \item[(2)] If $\chi\in\Irr(b)$ extends to a character $\w\chi\in\Irr(G)$
and if $\chi'\in\Irr(b')$ extends to $J_0\cap H$ for a subgroup
$J_0$ of $G$ with $N\leq J_0\leq G$ and $J_0/N\in\Syl_p(G/N)$,
then $\chi'$ extends to a character $\w\chi'\in\Irr(H)$ which 
satisfies $(*)$.
    \end{enumerate}
\item 
{\sf (Brauer characters)}
   \begin{enumerate}
   \item[(1)]
If $\phi'\in\IBr(b')$ extends to a character $\w\phi'\in\IBr(H)$,
then any $\phi\in\IBr(b)$ extends to a character $\w\phi\in\IBr(G)$
which satisfies
\[ (**) \qquad
\bl\Big( (\w\phi')_{J\cap H}\Big)^{J}=\bl(\w\phi_J)\text{ for every }J 
\text{ with }N\leq J\leq G.
\]
    \item[(2)]
If $\phi\in\IBr(b)$ extends to a character $\w\phi\in\IBr(G)$, then
any $\phi'\in\IBr(b')$ extends to a character $\w\phi'\in\IBr(H)$
which satisfies $(**)$.
    \end{enumerate}
\end{enumerate}
\end{thmA}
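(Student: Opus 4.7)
My plan is to derive all three parts from Theorem~A and a parallel ordinary-character analogue (which I expect to be established in the body of the paper by the same machinery), by applying both at every intermediate subgroup $J$ with $N\leq J\leq G$. The starting observation is that the hypotheses transfer to each such $J$: the condition $G=G[b]$ forces $J=J[b]$ by a standard subgroup-stability property of Dade's group, and clearly $\Cent_J(D)\subseteq\Cent_G(D)\cap J\subseteq J\cap H$. Thus Theorem~A furnishes, for every $J$ with $N\leq J\leq G$, a bijection $\Lambda_J\colon\IBr(J\mid\phi)\to\IBr(J\cap H\mid\phi')$ that preserves extendibility and satisfies $\bl(\Lambda_J(\rho))^J=\bl(\rho)$.

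Part (c) then goes as follows. Given $\w\phi'\in\IBr(H\mid\phi')$ extending $\phi'$, set $\w\phi:=\Lambda_G^{-1}(\w\phi')\in\IBr(G\mid\phi)$; this extends $\phi$ by the extension-to-extension clause of Theorem~A. For the compatibility $(**)$ at intermediate $J$, the point is that the restricted pair $(\w\phi_J,(\w\phi')_{J\cap H})$ must be matched by $\Lambda_J$; once this is verified, $\bl((\w\phi')_{J\cap H})^J=\bl(\Lambda_J(\w\phi_J))^J=\bl(\w\phi_J)$ is immediate. Part (a) is obtained in parallel via the ordinary-character analogue of Theorem~A. The height-zero assertion for $\chi\in\Irr(b)$ follows because $D$ is a defect group of $b$ (the Brauer first main theorem together with $\Cent_G(D)\subseteq H$ allowing transport of defect groups through the correspondence) and a block-respecting bijection preserves defect.

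For part (b) I would use the classical criterion of Gallagher (see also Isaacs--Navarro in its blockwise form) that a $G$-invariant $\chi\in\Irr(N)$ extends to $G$ if and only if $\chi$ extends to a Sylow $q$-preimage $J_q$ with $J_q/N\in\Syl_q(G/N)$ for every prime $q$ dividing $|G/N|$. In (b)(1) the case $q=p$ is given by hypothesis; for $q\neq p$ the factor $J_q/N$ is a $p'$-group, so extendibility of $\chi$ to $J_q$ is equivalent to extendibility of the underlying Brauer character $\phi\in\IBr(b)$ to $J_q$, and part (c) applied to the $p$-modular reduction of $\w\chi'$ produces an extension of $\phi$ to all of $G$, hence in particular to each such $J_q$. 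Assembling these Sylow-preimage extensions by Gallagher yields $\w\chi\in\Irr(G)$; the block condition $(*)$ then propagates from the corresponding $(**)$ at the Brauer level together with the Sylow-$p$ compatibility. Part (b)(2) is symmetric, exchanging the roles of $G$ and $H$.

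The principal obstacle is the restriction-compatibility of the bijection $\Lambda_J$ at intermediate subgroups $J$, which is not literally asserted by Theorem~A and has to be extracted from the construction of $\Lambda$ carried out in its proof; presumably $\Lambda$ is defined by a canonical procedure (extension within a specific block followed by restriction) that is manifestly stable under passage to intermediate groups. A secondary technical point is the Brauer--ordinary comparison in part (b): one needs the ordinary analogue of Theorem~A to interact correctly with the decomposition map, so that Sylow-$q$-preimage extensions for $q\neq p$ can be translated faithfully between the ordinary and Brauer settings.
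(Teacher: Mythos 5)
Your proposal inverts the paper's logical architecture and is circular as it stands: in the paper, Theorem \ref{thmA} is \emph{deduced from} Theorem \ref{thm1_3} (its proof explicitly invokes Theorem \ref{thm1_3}(c)(2) to produce the extension $\w\phi'_0$ on the central extension $\wh H$, and then applies Proposition 3.6 of \cite{Spaeth_red_BAW}), so you cannot use Theorem \ref{thmA} as the input for proving Theorem \ref{thm1_3}. Moreover, the ``principal obstacle'' you flag at the end --- that $\Lambda_J$ should match the restricted pair $(\w\phi_J,(\w\phi')_{J\cap H})$ at every intermediate $J$ --- is not a technical footnote but is essentially the entire content of $(*)$ and $(**)$, and Theorem \ref{thmA} contains no information that would let you extract it. The paper's actual proof is devoted precisely to this point: for each $J$ with $N\leq J\leq G$ and $p\nmid |J/N|$ one has a \emph{unique} extension $\kappa^{(J)}$ of $\phi$ lying in the block $\bl(\w\phi'_{H\cap J})^J$ (Lemma \ref{prop3_10} together with Lemma \ref{lem3_7}); Lemma \ref{lem2_6_r_eineRichtung} and this uniqueness force the compatibility $(\kappa^{(J)})_{\spann<N,j>}=\kappa^{(\spann<N,j>)}$; an arbitrary global extension $\theta$ is then corrected by a linear character $\epsilon$ assembled from the local discrepancies and certified via Brauer's characterization of characters; and finally Lemma \ref{lem2_7} upgrades the block condition from the subgroups $\spann<N,g>$ to all intermediate $J$. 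None of this can be recovered from the mere existence of a block-preserving bijection $\IBr(G\mid\phi)\to\IBr(H\mid\phi')$.

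Two further concrete errors. First, your height-zero argument assumes $D$ is a defect group of $b$; it is a defect group of $b'$, and the paper explicitly warns that the defect groups of $b'$ and $b$ may differ. The paper instead takes a defect group $\w D$ of $\bl(\wchip)^G$, passes to the unique block $c\in\Bl(N\w D)$ covering $b$, and uses Corollary (9.18) of \cite{Navarro} plus Exercise (6.7) of \cite{Isa} to produce a height zero $\chi\in\Irr(b)$ extending to $N\w D$. Second, in part (b) your claim that for $q\neq p$ extendibility of $\chi$ to $J_q$ is ``equivalent to extendibility of the underlying Brauer character'' is unjustified: $\chi^0$ need not be irreducible, and no such ordinary--modular transfer is available; the paper handles the ordinary case entirely within $\Irr$, using the $\Irr$-versions of Lemmas \ref{prop3_9} and \ref{prop3_10}. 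A correct write-up must follow the constructive route of Section \ref{sec3} (or an equivalent one) rather than attempting to bootstrap from Theorem \ref{thmA}.
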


In this paper we introduce in Section \ref{sec_not} 
the general notation and some fundamental character theoretic tools. 
In Section \ref{sec3} we explore Dade's group $G[b]$ and 
deduce several criteria for the extensibility of characters,
and give a proof of Theorem \ref{thm1_3}.
We conclude the paper by constructing the character 
correspondence from Theorem \ref{thmA} and proving by Theorem \ref{thm_gen_HK} a generalization of the Harris-Kn\"orr theorem.

\section{Notation and general lemmas}\label{sec_not}
In this section we explain most of the used notation 
and give several lemmas which are useful for our main results. 

\begin{notation}[Characters and $p$-blocks]
For characters and blocks we use mainly the notation of 
\cite{NagaoTsushima} and \cite{Navarro}, respectively. 
Let $p$ be a prime. Let $(\mathcal K, \calO, k)$ be a 
$p$-modular system, that is "big enough" with respect to 
all finite groups occurring here. That is to say, 
$\calO$ is a complete discrete valuation ring of
rank one such that its quotient field $\mathcal K$ is 
of characteristic zero, 
and its residue field $k=\calO/\mathrm{rad}(\calO)$ is of
characteristic $p$, and that $\mathcal K$ and $k$ are
splitting fields for all finite groups occurring in this paper.
We denote the canonical epimorphism from $\calO$ to $k$ 
by $^*: \calO\rightarrow k$. 

In this paper $G$ is always a finite group.
For $g\in G$ we write $\Cl_G(g)$ for the conjugacy class of $G$
containing $g$. We denote by ${\mathrm{Cl}}(G)$ the set of
all conjugacy classes in $G$. For any subset $X$ of $G$
we denote by $X^+$ the sum $\sum_{x\in X} x$ in $\mathcal KG$
and in $kG$, respectively. 
We write $G^0$ for the set of all $p$-regular elements of $G$. 
For subsets $S, T \subseteq G$ we write $S\subseteq_G T$
if $S^g := g^{-1}Sg \subseteq T$ for some element $g \in G$.

We denote by $\Bl(G)$ the set of all ($p$-)blocks of $G$. 
If $N\lhd G$ and $b\in \Bl(N)$, 
we write $\Bl(G\mid b)$ for the set of all $p$-blocks of $G$ covering $b$.
We write $\Irr(G)$ and $\IBr(G)$ respectively for the sets of
all irreducible ordinary and Brauer characters of $G$. 
If $H \leq G$ and if $\chi$ and $\theta$ are characters of $G$ and $H$
respectively, then we denote by $\chi_H$ and $\theta^G$ 
the restriction of $\chi$ to $H$ and the induction of $\theta$ to $G$,
respectively. For the $p$-block $B \in \Bl(G)$ we write $\Irr(B)$
 and $\IBr(B)$ respectively
for the sets of all characters in $\Irr(G)$ and $\IBr(G)$ which
belong to $B$. 

For a character $\theta\in\IBr(G)\cup \Irr(G)$ 
we denote by $\bl(\theta)$ the $p$-block of $G$ which $\theta$ belongs to. 
If $N\lhd G$, we say that $\o B\in\Bl(G/N)$ is contained in $B$ 
and write $\o B\subseteq B$, 
if all irreducible characters of $\o B$ lift to characters of $B$, 
see \cite[p.198]{Navarro} or \cite[Theorem 5.8.2]{NagaoTsushima}. 
If $N \lhd G$ and $\phi \in \IBr(N)$, 
then we write $\IBr(G\mid\phi)$ for the set of 
all irreducible Brauer characters $\psi$ of $G$ 
such that $\phi$ is an irreducible constituent of $\psi_N$, 
see page 155 of \cite{Navarro}.

We denote by $\la_{B}: \Z(kG)\rightarrow k$ 
the central function (central character) 
associated with $B$, 
see page 48 of \cite{Navarro}. 
For $\phi\in\IBr(B)$ and $\chi\in\Irr(B)$ 
we write also $\la_\phi$ and $\la_\chi$ instead of $\la_B$. 

For a group $A$ acting on a set $X$ and for $X' \subseteq X$,
we denote by $A_{X'}$ the stabilizer of $X'$ in $A$.
In particular, if $N \lhd G$ and $b \in \Bl(N)$, we denote by
$G_b$ the stabilizer of $b$ in $G$.
Throughout this paper by a module we mean a finitely generated 
right module unless stated otherwise.
For an indecomposable $kG$-module $X$ we denote by $\vx(X)$
a vertex of $X$, which is a $p$-subgroup of $G$, 
see \cite[Definition before 4.3.4]{NagaoTsushima}.
For $kG$-modules $X$ and $Y$,
we write $Y|X$ if $Y$ is (isomorphic to) a direct summand of $X$. 

\end{notation}

In addition we also consider sometimes blocks as algebras. 

\begin{notation}[Blocks as bimodules]
Assume  $H\leq G$ and $B\in \Bl(G)$.
We can see $B$ also as a two-sided ideal of the group algebra $kG$ 
and hence it becomes a $k[G\times G]$-module
via $\beta{\cdot}(g_1, g_2) := {g_1}^{-1}\beta g_2$
for $\beta \in B$ and $g_1, g_2 \in G$. 
We denote by $B{\downarrow}_{H\times H}$ 
the $k[H\times H]$-module given by the restriction of $B$. 
We define the diagonal subgroup $\Delta H$ of $H\times H$ by 
$\Delta H := \{(h,h) \in H\times H\mid h\in H \}$.
\end{notation}

The following is 
related to the theorem of Harris-Kn\"orr 
\cite[(9.28)]{Navarro}, see also \cite{HarrisKnoerr}.

\begin{lem}\label{lem2_3} 
Let $H\leq G$ and $B\in\Bl(G)$ and $Q \leq H$ a $p$-subgroup 
with $Q\Cent_G(Q)\subseteq H$. 
\begin{enumerate}
\item We can write
\begin{align*}
 B{\downarrow}_{H\times H}
= { \begin{pmatrix}
  	\bigoplus  &  B'  \  \\
{\scriptstyle{B'\in\Bl(H)}}& \\
{\scriptstyle{\Delta Q \ \subseteq_{H\times H} \vx (B')}  }  & \  
\end{pmatrix}} 
&\bigoplus 
{ \begin{pmatrix}   \bigoplus  &  B' \   \\
{\scriptstyle{B'\in\Bl(H)}}& \\
{\scriptstyle{\Delta Q \ {\not\subseteq}_{H\times H} \ \vx (B')}}  & \  
\end{pmatrix}}
\\
&\bigoplus
\begin{pmatrix}
                    \bigoplus &   X  \  \\
{\scriptstyle{\mathrm{indec.} 
\ k[H\times H]\text{-}\mathrm{module} \ X \ \ \ }
} & \
   \\
{\scriptstyle{
X{\Big|}k[HgH] \ \text{for some }g \in G\backslash H}
} & \
   \end{pmatrix}.
\end{align*} 

\item \label{lem2_3b} Let $N\lhd G$. 
Assume there is $b\in\Bl(N)$ with $b \mid (B{\downarrow}_{N\times N})$. 
Let $M:=N\cap H$, and suppose that there is a block $b'\in\Bl(M)$ 
that has a defect group containing $Q$ and satisfies $(b')^N = b$. 
Then there is a block $B'\in\Bl(H \mid b')$ with $(B')^G = B$.
\end{enumerate}
\end{lem}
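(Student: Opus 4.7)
For part (a), the plan is a Mackey--Krull--Schmidt argument: decompose $kG {\downarrow}_{H\times H}$ by $(H,H)$-double cosets as $kH \oplus \bigoplus_{g} k[HgH]$ (sum over non-trivial double coset representatives), observe that $kH = \bigoplus_{B' \in \Bl(H)} B'$ with each block of $H$ indecomposable as a $k[H\times H]$-module (its block idempotent is primitive in $\Z(kH)$), and then use Krull--Schmidt on the restriction of $B$, which is a $k[G\times G]$-summand of $kG$. The stated three-fold decomposition is obtained by further splitting the blocks of $H$ appearing in the $kH$-summand according to whether their vertex contains $\Delta Q$ up to $H\times H$-conjugation.

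For (b), I would apply (a) both to $(G,H,B,Q)$ and to $(N,M,b,Q)$, using that $Q \leq M$ (since $Q$ lies in a defect group of $b'$) and that $\Cent_N(Q) \leq \Cent_G(Q) \subseteq H$ forces $Q\Cent_N(Q)\subseteq N\cap H = M$. The standard module-theoretic characterisation of induced blocks states that, when $(b')^N$ is defined, $(b')^N = b$ if and only if $b' \mid b{\downarrow}_{M\times M}$; combined with $b \mid B{\downarrow}_{N\times N}$ this gives $b' \mid B{\downarrow}_{M\times M}$.

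Now write $B{\downarrow}_{H\times H} = \bigoplus_i X_i$ from (a) and restrict to $M\times M$. By Krull--Schmidt, $b'$ occurs as a summand of some $X_i{\downarrow}_{M\times M}$. Green's theorem on vertices of restricted modules yields $\Delta Q \leq \vx(b') \leq_{M\times M} \vx(X_i) \cap (M\times M)$, whence $\Delta Q \leq_{H\times H} \vx(X_i)$. The delicate step, which I expect to be the main obstacle, is to rule out that $X_i$ is of the third type in (a): for an indecomposable summand $X$ of $k[HgH]$ with $g\notin H$, Mackey gives $\vx(X)$ inside a twisted diagonal $\{(h,g^{-1}hg) : h \in H \cap {}^g H\}$, and if $(a,b) \in H\times H$ conjugates $\Delta Q$ into this twisted diagonal then $b^{-1}qb = g^{-1}a^{-1}qa g$ for all $q\in Q$, so $agb^{-1} \in \Cent_G(Q) \subseteq H$, forcing $g\in H$, a contradiction. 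The "type (ii)" summands are excluded directly by the vertex condition.

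Hence $X_i = B'$ is a block of $H$ with $\Delta Q \subseteq_{H\times H} \vx(B')$, and $b'\mid B'{\downarrow}_{M\times M}$ is exactly the assertion that $B'$ covers $b'$. Finally, identifying $\vx(B') = \Delta D$ with a defect group $D\leq H$ of $B'$ (after $H\times H$-conjugation), we have $Q \leq D$ and $Q\Cent_G(Q) \subseteq H$, so $(B')^G$ is defined; the equality $(B')^G = B$ then follows from the converse direction of the same standard characterisation used above, namely that, for a block $B'$ of $H$ with $(B')^G$ defined, $(B')^G = B$ if and only if $B' \mid B{\downarrow}_{H\times H}$, which is built into our construction.
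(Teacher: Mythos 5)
Your proposal is correct and follows essentially the same route as the paper: the double-coset (Mackey) decomposition plus Krull--Schmidt for (a), and for (b) the restriction of $b'\mid b{\downarrow}_{M\times M}\mid B{\downarrow}_{M\times M}$ through the three types of summands, using Green's theorem $\vx(b')=\Delta D_1\supseteq\Delta Q$ and the vertex-of-a-summand lemma to exclude the second type, the centralizer condition to exclude the $k[HgH]$-summands, and Nagao--Tsushima's induced-block criterion (Theorem 5.10.12(ii)) to conclude $(B')^G=B$. The only cosmetic difference is that you carry out the twisted-diagonal computation for the third case explicitly, where the paper simply cites the proof of Theorem 5.10.12(i) of Nagao--Tsushima.
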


\begin{proof}
Part (a) follows easily by Lemma 5.10.9, and 
the proof of Theorem 5.10.12(i) of \cite{NagaoTsushima}. 
 
In the situation assumed in (b) we have that $(b')^N = b$, and hence 
$b'\mid (b{\downarrow}_{M\times M})$, 
see Theorem 5.10.11 of \cite{NagaoTsushima}. 
Hence, by the assumption $b\mid (B{\downarrow}_{N\times N})$, 
we have $b' \mid (B{\downarrow}_{M\times M})$. From part (a) 
we know the direct summands of $B{\downarrow}_{H\times H}$. 
Then one of the following three cases holds: 

\noindent
{\sf Case 1.} $b'\mid (B'{\downarrow}_{M\times M})$
for some $B'\in\Bl(H)$ with $\Delta Q \subseteq_{H\times H}\vx(B')$.

\noindent
{\sf Case 2.} $b'\mid (B'{\downarrow}_{M\times M})$
for some $B'\in\Bl(H)$ with $\Delta Q \not\subseteq_{H\times H}\vx(B')$.

\noindent
{\sf Case 3.} $b'\mid (X{\downarrow}_{M\times M})$
for some indecomposable $k[H\times H]$-module $X$ 
with $X\mid k[HgH]$ for some $g\in G\setminus H$.

Suppose that {\sf Case 2} occurs, namely,  
$b'\mid (B'{\downarrow}_{M\times M})$
for some $B'\in\Bl(H)$ with $\Delta Q \not\subseteq \vx(B')$. 
Then, Lemma 4.3.4(ii) of \cite{NagaoTsushima} implies that 
$\vx(b') \subseteq_{H\times H}\vx(B')$. Green's theorem 
\cite[Theorem 5.10.8]{NagaoTsushima} shows that 
$\vx(b') = \Delta D_1$, where $D_1$ is a defect group of $b'$. 
By the assumption $Q \subseteq D_1$, we have $\Delta Q \subseteq 
\Delta D_1 \subseteq_{H\times H}\vx(B')$, a contradiction. 
So, {\sf Case 2} can not occur.

Suppose that {\sf Case 3} occurs, that is, 
$b'\mid (X{\downarrow}_{M\times M})$
for some indecomposable $k[H\times H]$-module $X$ with $X\mid k[HgH]$ 
for some $g\in G\setminus H$. Recall that 
$\Cent_G(Q) \subseteq H$. The proof of Theorem 5.10.12(i) of 
\cite{NagaoTsushima} shows that $X \mid k[HgH]$ 
for $g\in H$, a contradiction.

This implies that {\sf Case 1} holds, namely, 
there is a block $B'\in\Bl(H)$ such that
$b' \mid (B'{\downarrow}_{M\times M})$ and 
$\Delta Q \subseteq \vx(B')$. Since $\vx(b') \subseteq_{H\times H}\vx(B')$ 
by Lemma 4.3.4(ii) of \cite{NagaoTsushima} 
\[\vx(b') \subseteq_{H\times H}\vx(B')= \Delta(D),\] 
where $D$ is a defect group of $B'$.
Hence, by the hypothesis,
$\Delta Q \subseteq \Delta D_1 = \vx(b')
 \subseteq_{H\times H} \vx(B') = \Delta D$.
So we may assume $Q \subseteq D$, and hence
$\Cent_G(D) \subseteq \Cent_G(Q) \subseteq H$
again by our hypothesis. Thus, $\Cent_G(D) \subseteq H$.
Therefore, Theorem 5.10.12(ii) of \cite{NagaoTsushima} yields that
$(B')^G = B$. \end{proof}

The following statement helps to determine blocks.

\begin{lem}\label{lem2_4}
Let $N\lhd G$ and $\w\phi\in\IBr(G)$ such that 
$\phi:=\w\phi_N\in\IBr(N)$. 
Let $g \in G$, $\o G:=G/N$ and  $\o g:= gN \in \o G$.
Then the following holds:
\begin{enumerate} \label{lem2_4a}
  \item $\la_{\w\phi}(\Cl_G(g)^+)=  \la_{1_{\o G}}(\Cl_{\o G}(\o g)^+)\, 
\la_{\w\phi_{\langle N,g \rangle }}\left((\Cl_{G}(g)\cap gN)^+\right )$.
  \item \label{lem2_4b} If  $p\nmid |G/N|$, then 
$\la_{1_{\o G}}(\Cl_{\o G}(\o g)^+) \neq 0$,
namely this value is invertible.
\end{enumerate}
\end{lem}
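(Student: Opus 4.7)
My plan: for part (b), I would note that $|\Cl_{\o G}(\o g)|$ divides $|\o G|=|G/N|$, so if $p\nmid|G/N|$ then $|\Cl_{\o G}(\o g)|^*$ is a unit in $k$; combined with the identity $\lambda_{1_{\o G}}(\Cl_{\o G}(\o g)^+)=|\Cl_{\o G}(\o g)|^*$ (which is the standard formula for the central character attached to the trivial character of $\o G$), this proves the claim.

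For part (a), my strategy is to decompose $\Cl_G(g)$ along cosets of $N$ and to evaluate both sides of the asserted identity through a single ordinary character of $G$. Writing $\pi\colon G\to\o G$ for the quotient, every element $hgh^{-1}\in\Cl_G(g)$ has image $\o h^{-1}\o g\o h\in\Cl_{\o G}(\o g)$, and for any $\o x\in\Cl_{\o G}(\o g)$ and any $h\in G$ with $\o h^{-1}\o g\o h=\o x$, conjugation by $h$ defines a bijection $\Cl_G(g)\cap gN\to\Cl_G(g)\cap xN$. Consequently
\[\Cl_G(g)=\bigsqcup_{\o x\in\Cl_{\o G}(\o g)}(\Cl_G(g)\cap xN), \qquad |\Cl_G(g)|=|\Cl_{\o G}(\o g)|\cdot|\Cl_G(g)\cap gN|.\]

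Next, I would pick an ordinary character $\w\chi\in\Irr(\bl(\w\phi))$ with $\w\chi_N\in\Irr(N)$ and $\w\chi_L\in\Irr(\bl(\w\phi_L))$, where $L:=\langle N,g\rangle$. Such a $\w\chi$ should exist thanks to the extendibility of $\w\phi$: standard block theory produces a $G$-invariant ordinary character in $\bl(\phi)$ that extends to $G$ inside $\bl(\w\phi)$, and Clifford theory then forces the restriction $\w\chi_L$ to be irreducible with the correct block. Applying the formula $\lambda_B(\Cl_G(x)^+)=(|\Cl_G(x)|\chi(x)/\chi(1))^*$ for $\chi\in\Irr(B)$, together with $\w\chi_L(1)=\w\chi(1)$ and the fact that $\w\chi$ is a $G$-class function (hence constant on $\Cl_G(g)\cap gN\subseteq\Cl_G(g)$), I would obtain
\[\lambda_{\w\phi}(\Cl_G(g)^+)=\bigl(|\Cl_G(g)|\w\chi(g)/\w\chi(1)\bigr)^*=|\Cl_{\o G}(\o g)|^*\cdot\bigl(|\Cl_G(g)\cap gN|\w\chi(g)/\w\chi(1)\bigr)^*,\]
and the right-hand factor equals $\lambda_{\w\phi_L}((\Cl_G(g)\cap gN)^+)$ by the choice of $\w\chi$, which combined with $\lambda_{1_{\o G}}(\Cl_{\o G}(\o g)^+)=|\Cl_{\o G}(\o g)|^*$ yields (a).

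The main obstacle I anticipate is justifying the existence of the ordinary character $\w\chi\in\Irr(\bl(\w\phi))$ whose restriction to $L$ is irreducible and lies in $\bl(\w\phi_L)$. This is the block-theoretic crux: one needs to translate the given Brauer extension of $\w\phi$ to $G$ into a compatible ordinary extension whose restriction to $L$ sits in the correct block of $L$, and this is a nontrivial compatibility between covering blocks of $G$ and $L$.
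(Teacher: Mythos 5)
Your part (b) is exactly the paper's argument and is correct. For part (a) the paper simply cites Lemma 2.5(a) of \cite{Spaeth_red_BAW}, so you are supplying a proof the paper omits; unfortunately your route has a genuine gap, and it sits exactly where you flag it. You need a single ordinary character $\w\chi\in\Irr(\bl(\w\phi))$ whose restriction to $L:=\langle N,g\rangle$ is irreducible and lies in $\bl(\w\phi_L)$. This is not ``standard block theory'': passing from a Brauer extension $\w\phi$ of $\phi$ to an ordinary character in the same block of $G$ that restricts irreducibly, let alone into the prescribed block of $L$, is precisely the kind of statement that Theorem C of this paper establishes only under much stronger hypotheses ($G=G[b]$ plus local data). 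A block containing some $\w\phi$ with $\w\phi_N\in\IBr(N)$ need not contain any ordinary irreducible character restricting irreducibly to $N$ or to $L$, and even when it does there is no a priori reason why $\bl(\w\chi_L)=\bl(\w\phi_L)$. Without such a $\w\chi$ the two central-character values you compare are computed from unrelated characters and the identity does not follow.

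The detour through ordinary characters is also unnecessary. Let $V$ be the simple $kG$-module affording $\w\phi$, with representation $\calR$; recall that any $z\in\Z(kG)$ acts on a simple module in a block $B$ by the scalar $\la_B(z)$. Since $V_N$ is simple, $V_L$ is simple and affords $\w\phi_L$. Your coset decomposition $\Cl_G(g)=\bigsqcup_{\o x\in\Cl_{\o G}(\o g)}(\Cl_G(g)\cap xN)$ is correct, and $(\Cl_G(g)\cap gN)^+\in\Z(kL)$, so it acts on $V_L$ by the scalar $\la_{\w\phi_L}\left((\Cl_G(g)\cap gN)^+\right)$. Every other piece satisfies $(\Cl_G(g)\cap xN)^+=h^{-1}(\Cl_G(g)\cap gN)^+h$ for a suitable $h\in G$, hence acts on $V$ by the same scalar, since conjugating a scalar matrix changes nothing. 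Summing over the $|\Cl_{\o G}(\o g)|$ cosets gives $\la_{\w\phi}(\Cl_G(g)^+)=|\Cl_{\o G}(\o g)|^*\,\la_{\w\phi_L}\left((\Cl_G(g)\cap gN)^+\right)$, which together with $\la_{1_{\o G}}(\Cl_{\o G}(\o g)^+)=|\Cl_{\o G}(\o g)|^*$ is part (a). I recommend replacing the ordinary-character step by this module-theoretic argument.
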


\begin{proof}
Part (a) follows from Lemma 2.5(a) of {\cite{Spaeth_red_BAW}}. 
By definition  $\la_{1_{\o G}}(\Cl_{\o G}(\o g)^+)$ 
and $|\Cl_{\o G}(\o g)|^*$ coincide.
Since $p\nmid |\o G|$ and  $|\Cl_{\o G}(\o g)|$ divides $|\o G|$, 
$|\Cl_{\o G}(\o g)|^*$ is invertible in $k$.
\end{proof}

The following statement seems to be well-known, 
but we give its proof for completeness.

\begin{lem}\label{lem2_5}
Let $N\lhd G$ and $H\leq G$. Let $M:=N\cap H$, $b'\in\Bl(M)$ 
and $c'\in\Bl(H\mid b')$. If $(b')^N$ and $(c')^G$ are defined, then $(c')^G$ 
covers $(b')^N$. 
\end{lem}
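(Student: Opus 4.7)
The plan is to verify $\la_{(c')^G}(K^+)=\la_{(b')^N}(K^+)$ for every $G$-conjugacy class $K\subseteq N$. These class sums form a basis of $\Z(kN)\cap\Z(kG)$, so the identity descends to the $G$-orbit idempotent $\sum_{g}e_{b^g}$ (summed over a transversal of $G_b$ in $G$), on which $\la_{(b')^N}$ takes value $1$; hence $\la_{(c')^G}$ also takes value $1$ there, which is the standard criterion for $B:=(c')^G$ to cover the $G$-orbit of $b:=(b')^N$, and in particular to cover $b$.

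Fix $K\in\mathrm{Cl}(G)$ with $K\subseteq N$. By the defining formula for the induced block central character applied to $B=(c')^G$,
\[ \la_B(K^+)=\la_{c'}((K\cap H)^+)=\la_{c'}((K\cap M)^+), \]
the second equality because $K\subseteq N$ forces $K\cap H=K\cap N\cap H=K\cap M$. Since $K$ and $M$ are both $H$-stable and $K\cap M$ is a union of $M$-conjugacy classes, $(K\cap M)^+\in\Z(kM)\cap\Z(kH)$. The hypothesis that $c'$ covers $b'$ then gives $\la_{c'}(z)=\la_{b'}(z)$ for every $H$-invariant $z\in\Z(kM)$: for $\chi\in\Irr(c')$ each constituent $\psi$ of $\chi_M$ lies in the $H$-orbit of $b'$, and the $H$-invariance of $z$ makes $\la_\psi(z)=\la_{b'}(z)$ independent of which conjugate one chooses. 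Applied at $z=(K\cap M)^+$, this yields $\la_B(K^+)=\la_{b'}((K\cap M)^+)$.

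For the other side, decompose $K=L_1\sqcup\cdots\sqcup L_s$ into $N$-conjugacy classes, and apply the defining formula for $b=(b')^N$ to each piece:
\[ \la_b(K^+)=\sum_{i=1}^{s}\la_{b'}((L_i\cap M)^+)=\la_{b'}((K\cap M)^+), \]
which matches the expression above. I expect the only nontrivial point to be the Clifford-theoretic step $\la_{c'}|_{\Z(kM)\cap\Z(kH)}=\la_{b'}|_{\Z(kM)\cap\Z(kH)}$ for a covering pair, which can be cited from \cite{Navarro} or \cite{NagaoTsushima}; the remainder is bookkeeping with class-sum decompositions and the defining formula for block induction, and the same argument transcribes to Brauer characters by restricting to $p$-regular classes.
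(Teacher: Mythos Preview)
Your proof is correct and follows essentially the same route as the paper's: both verify $\la_{(c')^G}(K^+)=\la_{(b')^N}(K^+)$ for every $G$-class $K\subseteq N$ by reducing each side to $\la_{b'}((K\cap M)^+)=\la_{c'}((K\cap M)^+)$ via the defining formula for block induction together with the covering relation $c'\in\Bl(H\mid b')$. The paper simply quotes \cite[Theorem~(9.5)]{Navarro} for both the covering criterion and the equality $\la_{c'}=\la_{b'}$ on $H$-class sums in $M$, where you spell these out in slightly more detail; otherwise the arguments are the same.
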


\begin{proof}
According to Theorem (9.5) in \cite{Navarro} it is sufficient to show that 
\[ \la_{b'}^N(\Cl_G(n)^+)=\la_{c'}^G(\Cl_G(n)^+) \text{ for every } n\in N.\]
Since $c'$ covers $b'$, Theorem (9.5) in \cite{Navarro} implies 
\[ \la_{b'}(\Cl_H(m)^+)=\la_{c'}(\Cl_H(m)^+) \text{ for every } m\in M.\]
Then the definition of induced blocks 
shows that $ \la_{b'}^N(\Cl_G(n)^+)=0$  
and $\la_{c'}^G(\Cl_G(n)^+)=0$ for every $n\in N$ with 
$\Cl_G(n)\cap H=(\Cl_G(n)\cap N)\cap H= \Cl_G(n)\cap M =\emptyset$. 
For $m\in M$ we have 
\begin{align}
 \la_{b'}^N(\Cl_G(m)^+)=\la_{b'}((\Cl_G(m)\cap M)^+)=
 \la_{c'}((\Cl_G(m)\cap H) ^+)= \la_{c'}^G(\Cl_G(m) ^+).
\end{align}
This proves that $(c')^G$ covers $(b')^N$. 
\end{proof}

The following two technical statements help to control 
blocks that contain certain restrictions or extensions of characters. 

\begin{lem}\label{lem2_6_r_eineRichtung}
Let $N\lhd G$ and $H\leq G$ with $G=NH$. For $M:=N\cap H$ 
let $b'\in\Bl(M)$ be a block that has a defect group $D$ 
with $\Cent_G(D)\subseteq H$. 
Suppose some $\w\phi\in\IBr(G)$ and $\wphip\in \IBr(H)$ satisfy 
$\phi:=\w \phi_N\in\IBr(b)$ and $\phip:=\wphip_M\in\IBr(b')$, 
where $b:=(b')^N$. If $p\nmid |G/N|$ and $\bl(\wphip)^{G}= \bl(\w \phi)$, then
\[\bl(\wphip_{\spann<M,x>})^{\spann<N,x>}= \bl(\w \phi_{\spann<N,x>})
 \text{ for every } x\in H.\]
\end{lem}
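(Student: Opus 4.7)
My plan is to reduce the equality of induced blocks to an identity of central characters on $Z(kJ)$, which I then verify using Lemma~\ref{lem2_4} and the hypothesis $\bl(\wphip)^G = \bl(\w\phi)$.

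Set $J := \langle N, x\rangle$ and $K := \langle M, x\rangle$. From $G = NH$ and $x \in H$ one checks that $K = J \cap H$ and $J = NK$, and that the canonical maps induce isomorphisms $J/N \cong K/M$ and $G/N \cong H/M$. Since $p \nmid |G/N|$, every quotient in sight has order coprime to $p$.

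First I would verify that $\wphip_K \in \IBr(K)$ is irreducible and extends $\phip$, and likewise $\w\phi_J \in \IBr(J)$ is an irreducible extension of $\phi$, so that the blocks $\bl(\wphip_K)$ and $\bl(\w\phi_J)$ are unambiguously defined. Indeed, writing $\wphip_K = \sum_i a_i \psi_i$ with distinct $\psi_i \in \IBr(K)$ and restricting to $M$ yields $\phip = \sum_i a_i (\psi_i)_M$; since $\phip\in\IBr(M)$ each $(\psi_i)_M$ must be a nonnegative integer multiple of $\phip$, and comparing degrees forces exactly one $\psi_i$ with coefficient $1$ and $\psi_i(1) = \phip(1)$, so $\wphip_K$ itself is irreducible.

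Next, by the central-character criterion for equality of induced blocks, the desired identity $\bl(\wphip_K)^J = \bl(\w\phi_J)$ is equivalent to
\[
\la_{\wphip_K}\bigl((\Cl_J(y)\cap K)^+\bigr) = \la_{\w\phi_J}\bigl(\Cl_J(y)^+\bigr)\quad\text{for every } y \in J.
\]
Fixing such a $y$, I would apply Lemma~\ref{lem2_4}(a) twice: once to $\w\phi_J \in \IBr(J)$ with normal subgroup $N \lhd J$, and once to $\wphip_K \in \IBr(K)$ with normal subgroup $M \lhd K$. Each side then splits as a product of a ``quotient factor'' of the form $\la_{1}(\Cl_{\bullet}(\bar y)^+)$ times a ``coset factor'' involving $\la_{\w\phi_{\langle N,y\rangle}}$ respectively $\la_{\wphip_{\langle M,y\rangle}}$. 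Lemma~\ref{lem2_4}(b), together with the isomorphism $J/N \cong K/M$, shows the two quotient factors are equal and invertible in $k$.

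Cancelling the quotient factors, the identity reduces to a comparison of the coset factors, which I would obtain from the hypothesis: applying $\bl(\wphip)^G = \bl(\w\phi)$ to the $G$-class sum of $y$ gives $\la_{\w\phi}(\Cl_G(y)^+) = \la_{\wphip}((\Cl_G(y)\cap H)^+)$, and a further double application of Lemma~\ref{lem2_4}(a) — this time in $G$ with $N$ and in $H$ with $M$ — rewrites both sides as the same coset-factor expressions.

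The main obstacle is the combinatorial bookkeeping of how $\Cl_G(y)\cap H$ decomposes into $H$-conjugacy classes, and how this decomposition is mirrored by the decomposition of $\Cl_J(y)\cap K$ into $K$-classes under the identifications $G/N \cong H/M$ and $J/N\cong K/M$. The relations $G = NH$ and $J = NK$ keep everything in register, but verifying that the numerical coefficients in the class-sum decompositions line up correctly will require a careful case analysis.
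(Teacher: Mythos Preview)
Your plan has a structural gap. The first pair of applications of Lemma~\ref{lem2_4}(a), inside $J=\langle N,x\rangle$ and $K=\langle M,x\rangle$, is vacuous: since $J/N$ and $K/M$ are cyclic, every class in the quotient is a singleton, so the ``quotient factor'' equals $1$ and the ``coset factor'' is literally the quantity you started with (note $\Cl_J(y)\subseteq yN$). So nothing has been reduced at that stage. The second pair of applications, in $G$ and $H$, does give a nontrivial factorisation of $\la_{\w\phi}(\Cl_G(y)^+)$, but the coset factor you obtain is $\la_{\w\phi_{\langle N,y\rangle}}\bigl((\Cl_G(y)\cap yN)^+\bigr)$, and $\Cl_G(y)\cap yN$ is in general strictly larger than $\Cl_J(y)$ (it is the orbit under the full preimage of $\Cent_{G/N}(yN)$, not just under $J$). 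On the $H$-side the situation is worse: $\Cl_G(y)\cap H$ is typically a union of several $H$-classes, so Lemma~\ref{lem2_4}(a) must be applied term by term, producing coset factors attached to various $\langle M,y_i\rangle$. There is no mechanism in your outline for collapsing these back to the single $J$-level identity you need; the ``bookkeeping'' you defer is in fact the entire argument.

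The paper proceeds differently, by induction on $|\langle N,x\rangle:N|$. For a given $y\in J$ it distinguishes two cases. If $\langle N,y\rangle=J$ (so $\Cl_J(y)$ is a single $N$-orbit), one writes $\Cl_G(y)=\bigcup_{t\in\mathbb T}\Cl_J(y)^t$ with $\mathbb T\subseteq H$ a transversal for $N\Cent_G(y)$ in $G$; conjugation by $t\in H$ then shows that all summands contribute equally on both the $\w\phi$- and $\wphip$-sides, and dividing by the invertible scalar $|\mathbb T|^*$ gives the desired equality directly. If $\langle N,y\rangle\subsetneq J$, the induction hypothesis gives $\bl(\wphip_{\langle M,y\rangle\cap H})^{\langle N,y\rangle}=\bl(\w\phi_{\langle N,y\rangle})$, and then Lemma~\ref{lem2_5} together with Passman's criterion (Theorem~(9.5) of \cite{Navarro}) transports this up to the class $\Cl_J(y)$. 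Your approach is missing both the inductive structure and the transversal argument that replaces it in the base case.
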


\begin{proof}
Note that the considered blocks 
$\bl(\wphip_{\spann<M,x>})^{\spann<N,x>}$ and $\bl(\wphip)^G$ are 
defined since because of $p\nmid |G/N|$ the blocks 
$\bl(\wphip_{\spann<M,x>})$ and $\bl(\wphip)$ have $D$ as a defect group 
and because of $\Cent_G(D)\subseteq H$ the blocks are admissible with respect 
to $\spann<N,x>$ and $G$, respectively.

We prove this by induction on $|\spann<N,x> : N|$. 
We may assume $\bl(\wphip)^G = \bl(\w\phi)$. 

Let $x\in H$. If $|\spann<N,x>:N| = 1$, 
then the assertion is obvious since $\bl(\psi)^N = \bl(\phi)$. 
Now we have to check that 
$\la_{\w\phi_{\spann<N,x>}}(\Cl_{\spann<N,x>}(y)^+)
= \la^{\spann<N,x>}_{\wphip_{\spann<M,x>}}( \Cl_{\spann<N,x>}(y)^+ )$ 
holds for every  $y \in \spann<N,x>$.
Let $y \in \spann<N,x>$.

{\sf Case 1.} Assume first that $\spann<N,y> = \spann<N,x>$. 
This implies that $\Cl_{\spann<N,x>}(y)$ coincides with 
$\Cl_{\spann<N,y>}(y)$ and is an $N$-orbit. Further because of 
$G=NH$ and $N\lhd G$, $H$ permutes the $N$-orbits in $\Cl_{G}(y)$, that is, 
\[\Cl_G(y)  =  \bigcup_{t\in\TT}^.\Big(\Cl_{\spann<N,x>}(y)\Big)^t,
\]
where $\TT$ is a set of $N\Cent_G(y)$-coset representatives in $G$ with 
$\TT\subseteq H$. Note that the union is disjoint. Accordingly,  
$\TT$ is a set of $N\Cent_G(y)\cap H$-coset representatives in $H$. 

Since $|\TT|\gmid |H/M| = |G/N|$, $|\TT|$ is a $p'$-number. By definition, 
\[   \la_{\w\phi_{\spann<N,x>^t}}\Big( \Cl_{\spann<N,x>^t}(y^t)^+ \Big)=  
\la_{\w\phi_{\spann<N,x>}}  \Big( \Cl_{\spann<N,x>}(y)^+ \Big)\]
for any $t\in\mathbb T$. Thus, 
\begin{align}\label{eq_la_wphi}
\la_{\w\phi}(\Cl_G(y)^+) 
= \sum_{t\in\mathbb T}\la_{\w\phi_{\spann<N,x>^t}}
  \Big( \Cl_{\spann<N,x>^t}(y^t)^+ \Big)
= |\TT|^*\,\cdot\,
  \la_{\w\phi_{\spann<N,x>}} \Big( \Cl_{\spann<N,x>}(y)^+ \Big).
\end{align}
Clearly,
\[ 
\Cl_G(y)\cap H = 
 \left(\bigcup_{t\in\TT} \Big( \Cl_{\spann<N,x>}(y)\Big)^t \right) \cap H
= \bigcup_{t\in\TT}\Big(\Cl_{\spann<N,x>}(y)\cap H\Big)^t
\]
since $t\in H$. Thus, by the definition of central functions 
\begin{align}\label{eq_la_wpsi}
\la_{\wphip}\Big( (\Cl_G(y)\cap H)^+ \Big)&=
\sum_{t\in\TT} 
\la_{\wphip_{\spann<M,x>^t}} 
\Big( \left ((\Cl_{\spann<N,x>}(y)\cap H)^t\right )^+\Big)
\\ \nonumber
&= |\TT|^*\,{\cdot}\,
   \la_{\wphip_{\spann<M,x>}} \Big( (\Cl_{\spann<N,x>}(y)\cap H)^+ \Big).
\end{align}
Recall that 
$\la_{\w\phi}(\Cl_G(y)^+) = \la_{\wphip}\Big( (\Cl_G(y)\cap H)^+\Big)$
since $\bl(\wphip)^G = \bl(\w\phi)$.
Moreover, since $|\TT|^* $ is invertible in $k$ and since 
$\Cl_{\spann<N,x>}(y)\cap H = \Cl_{\spann<N,x>}(y)\cap\spann<M,x>$, 
the equations (\ref{eq_la_wphi}) and (\ref{eq_la_wpsi}) imply that
\begin{align*}
\la_{\w\phi_{\spann<N,x>}}(\Cl_{\spann<N,x>}(y)^+)
&= \la_{\wphip_{\spann<M,x>}}\Big( (\Cl_{\spann<N,x>}(y)\cap H)^+ \Big)
\\
&=
\la_{\wphip_{\spann<M,x>}}
\Big( (\Cl_{\spann<N,x>}(y)\cap \spann<M,x>)^+ \Big)
= \la^{\spann<N,x>}_{\wphip_{\spann<M,x>}}( \Cl_{\spann<N,x>}(y)^+ ).
\end{align*}
This proves that $\la_{\w\phi_{\spann<N,x>}}(\Cl_{\spann<N,x>}(y)^+) 
= \la^{\spann<N,x>}_{\wphip_{\spann<M,x>}}( \Cl_{\spann<N,x>}(y)^+ )$ 
whenever $\spann<N,y> = \spann<N,x>$.
          
{\sf Case 2.} Assume now that $\spann<N,y> \lneqq \spann<N,x>$.
Since $|\spann<N,y>:N| < |\spann<N,x>:N|$, it follows from our induction
hypothesis that 
\begin{align} \label{eq_star}
\bl(\wphip_{J})^{\spann<N,y>}  &= \bl(\w\phi_{\spann<N,y>})
\end{align}
for $J:=\spann<N,y>\cap H$. Clearly 
$J\lhd \spann<M,x>$ and $\spann<N,y> \lhd \spann<N,x>$. Accordingly 
$\bl(\wphip_{\spann<M,x>})$ covers $\bl(\wphip_{J})$, and 
$\bl(\w\phi_{\spann<N,x>})$ covers $\bl(\w\phi_{\spann<N,y>})$. 
According to Lemma \ref{lem2_5} we know that 
$\bl(\wphip_{\spann<N,x>\cap H})^{\spann<N,x>}$ covers 
$\bl(\wphip_{J})^{\spann<N,y>}  = \bl(\w\phi_{\spann<N,y>})$. 
For $\Cl_{\spann<N,x>}(y)$ this implies by Passman's result 
in Theorem (9.5) of \cite{Navarro} the following equalities
\begin{align*} 
\la_{\w\phi_{\spann<N,x>}}(\Cl_{\spann<N,x>}(y)^+)
&=\la_{\w\phi_{\spann<N,y>}}(\Cl_{\spann<N,x>}(y)^+)
\\ 
&=\la_{\wphip_{\spann<N,y>\cap H}}^{\spann<N,y>}(\Cl_{\spann<N,x>}(y)^+)
 =\la_{\w\phi'_{\spann<N,x>\cap H}}^{\spann<N,x>}(\Cl_{\spann<N,x>}(y)^+).
\end{align*}
The two cases combined prove $\la^{\spann<N,x>}_{\wphip_{\spann<M,x>}} 
= \la_{\w\phi_{\spann<N,x>}}$, namely,
$\bl(\wphip_{\spann<M,x>})^{\spann<N,x>} = \bl(\w\phi_{\spann<N,x>})$.
This finishes the proof.
\end{proof}

The converse of the above statement is true as well, even without the assumption $p\nmid|G/N|$. 
\begin{lem}\label{lem2_7}
Let $N\lhd G$ and $H\leq G$ with $G=NH$. For $M:=N\cap H$ let 
$b'\in\Bl(M)$ be a block that has a defect group $D$ with $\Cent_G(D)\subseteq H$.
Set $b := (b')^N$.
\begin{enumerate}
\item Suppose some $\w\phi\in\IBr(G)$ and $\wphip\in \IBr(H)$ satisfy 
$\phi:=\w \phi_N\in\IBr(b)$ and $\phip:=\wphip_M\in\IBr(b')$. If
\[\bl(\wphip_{\spann<M,x>})^{\spann<N,x>} 
= \bl(\w \phi_{\spann<N,x>}) \text{ for every } x\in H^0,\]
then $\bl(\wphip)^{G}= \bl(\w \phi)$.
\item  Suppose some $\w\chi\in\Irr(G)$ and $\wchip\in \Irr(H)$ satisfy 
$\chi:=\w \chi_N\in\Irr(b)$ and $\chip:=\wchip_M\in\Irr(b')$. If
\[\bl(\wchip_{\spann<M,x>})^{\spann<N,x>} 
= \bl(\w \chi_{\spann<N,x>}) \text{ for every } x\in H^0,\]
then $\bl(\wchip)^{G}= \bl(\w \chi)$.
\end{enumerate}
\end{lem}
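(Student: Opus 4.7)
My plan is to verify $\bl(\wphip)^G=\bl(\w\phi)$ by comparing the associated central characters on $\Z(kG)$. First I would check that the induced block $B:=\bl(\wphip)^G$ is actually defined: since $\wphip_M=\phip\in\IBr(b')$, the block $\bl(\wphip)$ covers $b'$ and therefore has a defect group $D_1\supseteq D$, whence $\Cent_G(D_1)\subseteq\Cent_G(D)\subseteq H$ gives $D_1\Cent_G(D_1)\subseteq H$ and the induction makes sense. A classical theorem of Osima (see \cite{Navarro}) expresses every primitive central idempotent of $kG$ as a $k$-linear combination of $\Cl_G(y)^+$ with $y\in G^0$, so two block central characters are equal as soon as they agree on all $p$-regular class sums. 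Hence it suffices to prove
\[\la_{\w\phi}(\Cl_G(y)^+)=\la_{\wphip}\bigl((\Cl_G(y)\cap H)^+\bigr) \quad\text{for every } y\in G^0.\]

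Fix such $y$. Although the hypothesis concerns only $x\in H^0$, one can always arrange $x$ to lie in the same $N$-coset as $y$: since $y\in G^0$, the coset $yN$ is $p$-regular in $G/N\cong H/M$ (the isomorphism induced by $G=NH$), and a standard lifting argument shows that any $p$-regular coset of $H/M$ contains a $p$-regular representative in $H$---pick any lift $h_0\in H$ of $yN$, observe that the $p$-part $h_{0,p}$ must lie in $M$ because $h_0M$ has $p'$-order, and take $x:=h_{0,p'}\in H^0$. Setting $K:=\spann<N,x>=\spann<N,y>$, one has $\spann<M,x>=K\cap H$. Because $\w\phi_N=\phi\in\IBr(N)$ is irreducible, a short Clifford argument (each $\IBr$-constituent of $\w\phi_K$ restricts to a positive multiple of $\phi$ and the multiplicities must sum to one) shows $\w\phi_K\in\IBr(K)$, and similarly $\wphip_{K\cap H}\in\IBr(K\cap H)$. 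The hypothesis applied to $x$ then provides the local equality
\[\la_{\w\phi_K}(\Cl_K(y)^+)=\la_{\wphip_{K\cap H}}\bigl((\Cl_K(y)\cap H)^+\bigr).\]

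To transfer this to $G$, I would reuse the decomposition appearing in Case~1 of Lemma \ref{lem2_6_r_eineRichtung}: $G=NH$ together with the fact that $H$ permutes the $N$-orbits inside $\Cl_G(y)$ provides a transversal $\TT\subseteq H$ and disjoint decompositions
\[\Cl_G(y)=\bigsqcup_{t\in\TT}\Cl_K(y)^t\quad\text{and}\quad\Cl_G(y)\cap H=\bigsqcup_{t\in\TT}(\Cl_K(y)\cap H)^t.\]
Evaluating $\la_{\w\phi}$ and $\la_{\wphip}$ respectively on these identities (exactly as in Lemma \ref{lem2_6_r_eineRichtung}, using class-function invariance to see that the $|\TT|$ conjugate summands contribute equally) yields
\[\la_{\w\phi}(\Cl_G(y)^+)=|\TT|^*\,\la_{\w\phi_K}(\Cl_K(y)^+), \quad \la_{\wphip}\bigl((\Cl_G(y)\cap H)^+\bigr)=|\TT|^*\,\la_{\wphip_{K\cap H}}\bigl((\Cl_K(y)\cap H)^+\bigr),\]
which together with the local equality finishes the verification for (a). Part (b) follows by exactly the same argument applied to $\w\chi$ and $\wchip$ in place of $\w\phi$ and $\wphip$. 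Note that in contrast to Lemma \ref{lem2_6_r_eineRichtung} we only ever multiply by $|\TT|^*$ rather than divide, which explains why the assumption $p\nmid|G/N|$ is no longer required.

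The main obstacle, I expect, is the $p$-regular lifting step combined with the bookkeeping needed to mirror the $H$-orbit decompositions of $\Cl_G(y)$ and $\Cl_G(y)\cap H$; once these are set up, the rest is essentially Case~1 of Lemma \ref{lem2_6_r_eineRichtung} read in reverse.
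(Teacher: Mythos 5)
Your proof is correct, and its core --- reducing to $p$-regular class sums and then transferring a local identity at $K=\spann<N,y>$ up to $G$ via the $H$-permutation of the $N$-orbits inside $\Cl_G(y)$ --- is exactly the computation of Case~1 of Lemma \ref{lem2_6_r_eineRichtung}, which is also what the paper reuses. You diverge from the paper in how the auxiliary element $x\in H^0$ is produced and in the treatment of classes that miss $H$. The paper distinguishes two cases: when $\Cl_G(g)\cap H\neq\emptyset$ it takes $x$ to be a $G$-conjugate of $g$ lying in $H$ (automatically in $H^0$) and evaluates the hypothesis at the class of $x$ itself; when $\Cl_G(g)\cap H=\emptyset$ it invokes Corollary 5.10.13 of Nagao--Tsushima to write $\bl(\w\phi)=c^G$ for some $c\in\Bl(H)$ and concludes that both sides vanish. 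You instead pick $x\in H^0$ merely in the same $N$-coset as $y$ (your lifting argument is sound: $G=NH$ gives $yN\cap H\neq\emptyset$, and the $p$-part of any lift lands in $M$), and then evaluate the resulting equality of central characters of $\Z(kK)$ at $\Cl_K(y)^+$; this is legitimate because the hypothesis gives equality of the two block central characters on all of $\Z(kK)$, not only at classes meeting $H$. The payoff is a uniform argument in which the empty-intersection case degenerates to $0=|\TT|^*\cdot 0$, so the separate appeal to Corollary 5.10.13 disappears; the cost is the small coset-lifting step. Your closing remark that only multiplication by $|\TT|^*$ (never division) occurs, so that $p\nmid|G/N|$ is not needed, coincides with the paper's own parenthetical observation.
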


\begin{proof}
We restrict ourselves to prove part (a), since  the proof of part (b) 
works analogously. 

It is sufficient to verify that $\la_{\wphip}((\Cl_G(g)\cap H)^+)
=\la_{\w\phi}(\Cl_G(g)^+)$ for every $g\in G$. 
Note that according to \cite[Theorem 3.6.24(i)]{NagaoTsushima} it 
is sufficient to prove the equality on $p$-regular classes.
Let $g\in G^0$.

{\sf Case 1.} Assume first that $\Cl_G(g)\cap H\neq\emptyset$. 
Let $x\in \Cl_G(g)\cap H$, and hence $\Cl_G(g) = \Cl_G(x)$. 
Let $\TT\subseteq H$ be a set of $N\Cent_G(x)$-coset representatives in $G$. 
The proof of {\sf Case 1} in Lemma \ref{lem2_6_r_eineRichtung} shows that 
\begin{align*} 
\la_{\w\phi}(\Cl_G(x)^+) &= |\TT|^* \,{\cdot}\,
\la_{\w\phi_{\spann<N,x>}} \Big( \Cl_{\spann<N,x>}(x)^+ \Big)
\qquad \text{ and }
\\ 
\la_{\wphip}\Big( (\Cl_G(x)\cap H)^+ \Big)
&= |\TT|^*\,{\cdot}\,
   \la_{\wphip_{\spann<M,x>}} \Big( (\Cl_{\spann<N,x>}(x)\cap H)^+ \Big).
\end{align*}
(Note that for those formulas the assumption $p\nmid |G/N|$ is not used.)
Since  $\Cl_{\spann<N,x>}(x) \cap \spann<M,x> = \Cl_{\spann<N,x>}(x) \cap H$, 
the assumption $\la^{\spann<N,x>}_{\wphip_{\spann<M,x>}} 
= \la_{\w\phi_{\spann<N,x>}}$ implies that
\begin{align*}
\la_{\w\phi_{\spann<N,x>}}(\Cl_{\spann<N,x>}(x)^+)
&= \la_{\wphip_{\spann<M,x>}}
   \Big( (\Cl_{\spann<N,x>}(x)\cap H)^+\Big).
\end{align*}
Thus, we get $\la_{\w\phi}(\Cl_G(g)^+) = \la^G_{\wphip}(\Cl_G(g)^+)$.

{\sf Case 2.} Suppose that $\Cl_G(g) \cap H = \emptyset$. By Corollary 5.10.13 of 
\cite{NagaoTsushima}, $\bl(\w\phi) = c^G$ for some $c\in\Bl(H)$. 
Then, $\la_{\w\phi}(\Cl_G(g)^+) = \la^G_c(\Cl_G(g)^+) = \la_c((\Cl_G(g)\cap H)^+) = 0$, 
and $\la^G_{\wphip}(\Cl_G(g)^+) = \la_{\wphip}( (\Cl_G(g)\cap H)^+) = 0$.

Together with the first case this shows  $\bl(\wphip)^G = \bl(\w\phi)$.
\end{proof}

\section{Extending to $G[b]$}\label{sec3}
We study here the group $G[b]$ introduced 
by Dade in (2.9) \cite{DadeBlockExtensions}, 
that is a normal subgroup of $G_b$ according to Proposition 2.17 of \cite{DadeBlockExtensions}. 
This group plays a central role in the study 
of the ramification of blocks, covering a 
given block of a normal subgroup. 
In \cite{Murai_Dade}, Murai gives 
some character theoretic interpretation of this group.
We use the notation from \cite{Murai_Dade}. 

\begin{notation}\label{Def_G[b]}
Let $N\lhd G$ and $b\in\Bl(N)$. Then let
\[ G[b]:=\{ g\in G_b\mid 
\la_{b^{(g)}}(\Cl_{\spann<N,g>}(y)^+)\neq 0   
\text{ for some } y\in gN\},\]
where for every $g\in G$ we denote  
by $b^{(g)}$ an arbitrary block in $\Bl(\spann<N,g>\mid b)$. 
Note that by this definition, $G[b]$ is well-defined and 
$G[b]$ does not depend on the actual choice of the blocks $b^{(g)}$,
see the next proposition.
Let $e$ be the central-primitive idempotent of $\mathcal O N$ associated to
$b$. 
This group controls the ramification of blocks 
above the block of a normal subgroup. 
By definition $G[b]\lhd G_b$. 
\end{notation}

\begin{prop}\label{prop_def_alt}
The group $G[b]/N$ satisfies 
\[G[b]=\{ g \in G\mid  (e \mathcal O N g) (e \mathcal O N g^{-1})= e \mathcal O N \}.\]
Accordingly $G[b]/N$ coincides with the group introduced in (2.9a) 
of \cite{DadeBlockExtensions}, see also page 210 of 
\cite{Kuelshammer} and Lemma 3.2 of \cite{HidaKoshitani}. 
\end{prop}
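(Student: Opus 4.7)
The proposition asserts the equivalence of two definitions of $G[b]$: the character-theoretic one from Notation~\ref{Def_G[b]} (due to Murai), and the algebraic condition $(e\calO N g)(e\calO N g^{-1}) = e\calO N$, which is Dade's formulation in \cite[(2.9a)]{DadeBlockExtensions}. My plan is to establish the equivalence via the results of \cite{Murai_Dade}, while the translation between the displayed equation and Dade's (2.9a) is already explicit in \cite[p.~210]{Kuelshammer} and \cite[Lemma~3.2]{HidaKoshitani}.

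First I would reduce to $g \in G_b$: if $g \notin G_b$ then $e$ and its conjugate $e^g$ are orthogonal primitive central idempotents of $\calO N$, so a direct expansion shows $(e\calO N g)(e\calO N g^{-1})$ vanishes; correspondingly, the central characters $\la_{b^{(g)}}$ entering Notation~\ref{Def_G[b]} are only defined when $g \in G_b$. For $g \in G_b$, the identity $eg = ge$ makes $e\calO N g$ a sub-$(e\calO N, e\calO N)$-bimodule of $e\calO \spann<N,g>$, and the displayed condition expresses the invertibility of this graded piece inside the $\spann<N,g>/N$-graded algebra $e\calO \spann<N,g>$.

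The central step, following Murai, is to decompose $e$ in $\calO \spann<N,g>$ as $e = \sum_B e_B$ over the blocks $B$ of $\spann<N,g>$ covering $b$, and to expand $(e\calO N g)(e\calO N g^{-1})$ as a linear combination of class sums $\Cl_{\spann<N,g>}(y)^+$ with $y \in gN$. Applying the central-character identity $\Cl_{\spann<N,g>}(y)^+ \cdot e_B = \la_B(\Cl_{\spann<N,g>}(y)^+)\cdot e_B$ then translates the presence of $e$ in this product into the non-vanishing of some $\la_{b^{(g)}}(\Cl_{\spann<N,g>}(y)^+)$ for a $y \in gN$ and some $b^{(g)}$ covering $b$, which is exactly the condition defining $G[b]$ in Notation~\ref{Def_G[b]}; in the process the choice of $b^{(g)}$ is seen to be immaterial. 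The main obstacle is the idempotent-and-class-sum bookkeeping linking the $\spann<N,g>/N$-graded structure of $e\calO \spann<N,g>$ with the block decomposition of $\spann<N,g>$ above $b$. In practice this content is entirely contained in \cite{Murai_Dade}, so my written proof would cite this alongside \cite{Kuelshammer,HidaKoshitani} for the identification with Dade's (2.9a), rather than recomputing.
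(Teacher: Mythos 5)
Your proposal is correct and takes essentially the same route as the paper: both reduce the membership test for $g$ to the section $\langle N,g\rangle/N$ (where the algebraic condition is visibly local and the character-theoretic definition is already stated element by element) and then invoke Murai's Corollary~3.3 of \cite{Murai_Dade} --- whose idempotent/class-sum computation you sketch explicitly --- to identify the two conditions in that cyclic case. The paper's only additional ingredient is the citation of K\"ulshammer for the containment $G_0[b]\leq G[b]$ justifying the reduction, so the substance of the two arguments is the same.
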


\begin{proof} 
The characterization of $G[b]$ given by K\"ulshammer in 
\cite[p.210]{Kuelshammer} shows that $G_0[b]\leq G[b]$ 
whenever $N\leq G_0 \leq G$. 
Hence any element $x\in G$ is contained 
in $G[b]$ if and only if for $G_0:=\spann<N,x>$ the element satisfies  
$x\in G_0[b]$. In this case $G_0/N$ is cyclic and by Corollary 3.3 
of \cite{Murai_Dade} the group $G_0[b]$ can be defined as above.
\end{proof}
For a proof we recall several basic known results on the group $G[b]$. 

\begin{lem}\label{KueWatanabe}
Assume that $N \lhd G$ and $b\in\bl(N)$.
\begin{enumerate}
    \item \label{KueWatanabe_a}
	Every $\chi\in\Irr(b)$ and every
$\phi\in\IBr(b)$ is $G[b]$-invariant.
    \item \label{KueWatanabe_b}
    If $G = G[b]$ and $J$ is any subgroup of $G$ with
$N\leq J\leq G$, then $J = J[b]$.
    \item \label{KueWatanabe_c}
	If $G = G[b]$, $M\leq N$ and $b'\in \Bl(M)$ with $(b')^N=b$, 
then $G = N\Cent_G(D) = N\NNN_G(D)$ for every defect group $D$ of $b'$.
\end{enumerate}
\end{lem}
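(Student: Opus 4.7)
The plan is to handle the three parts by combining the definition in Notation \ref{Def_G[b]} with known properties of Dade's group. Parts (a) and (b) follow quickly from the definition, whereas (c) is the main obstacle and requires the Brauer-pair machinery.

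For (a), I would cite Dade's theorem (Proposition 2.17 of \cite{DadeBlockExtensions}) that $G_b/G[b]$ acts faithfully on $\Irr(b)$, so every $\chi\in\Irr(b)$ is $G[b]$-invariant. For $\IBr(b)$, the $G$-equivariance of the decomposition matrix---whose rows indexed by $\Irr(b)$ are $G[b]$-fixed and whose columns are linearly independent---forces every $\phi\in\IBr(b)$ to be $G[b]$-invariant as well; alternatively, one can quote Murai's character-theoretic reformulation in \cite{Murai_Dade} directly.

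For (b), the key is that the defining condition in Notation \ref{Def_G[b]}, namely
\[
\la_{b^{(g)}}(\Cl_{\spann<N,g>}(y)^+)\neq 0 \text{ for some } y\in gN,
\]
depends only on data inside $\spann<N,g>$ and on the chosen block of $\spann<N,g>$ lying over $b$. Hence for $g\in J$ with $N\leq J\leq G$, we have $g\in G[b]$ if and only if $g\in J[b]$, so $J[b]=J\cap G[b]$; and if $G=G[b]$, this forces $J=J[b]$.

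For (c), my plan has two steps. First, starting from $b'$ with defect group $D$ and $(b')^N=b$, I would produce a $b$-Brauer pair $(D,c)$ with $c\in\Bl(\Cent_N(D))$ and $c^N=b$, using the First Main Theorem applied inside $M$ together with transitivity of block induction. The deepest input from Dade's theory (see \cite{DadeBlockExtensions}, \cite{Murai_Dade}, and \cite[p.~210]{Kuelshammer}) is that $G[b]$ stabilizes every $N$-orbit of $b$-Brauer pairs, so $(D,c)^g$ is $N$-conjugate to $(D,c)$ for every $g\in G$. A Frattini argument then yields $G=N\NNN_G(D,c)\subseteq N\NNN_G(D)$. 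Second, to upgrade to $G=N\Cent_G(D)$, I would apply part (b) to subgroups of the form $J=N\spann<g>$ for $g\in\NNN_G(D)$, and exploit that the inertial quotient $\NNN_G(D,c)/\Cent_G(D)$ is already realized inside $\NNN_N(D,c)/\Cent_N(D)$; this forces $\NNN_G(D)\subseteq N\Cent_G(D)$. The hard part of this plan is assembling the Brauer-pair stability of $G[b]$, which is the real heart of the argument and the reason for introducing this group in the first place.
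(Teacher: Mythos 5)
Parts (a) and (b) of your proposal are essentially correct and agree with the paper's (very brief) proof. For (a), the paper cites (13.6) and Proposition 13.3 of \cite{DadeBlockExtensions}, whose content is that $G[b]$ acts trivially on $\Irr(b)$; your decomposition-matrix argument is a legitimate way to pass from $\Irr(b)$ to $\IBr(b)$. One caveat: what you need, and what Dade proves, is that the $G_b$-action on $\Irr(b)$ is trivial on $G[b]$; the stronger assertion that $G_b/G[b]$ acts \emph{faithfully} on $\Irr(b)$ is not what is being cited and is not needed, so you should drop that word. For (b), your observation that the defining condition in Notation \ref{Def_G[b]} involves only data inside $\spann<N,g>$, so that $J[b]=J\cap G[b]$, is exactly the paper's argument via (2.9a) of \cite{DadeBlockExtensions} and Proposition \ref{prop_def_alt}.

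Part (c) has a genuine gap. Both pivotal claims of your two-step plan are left unproven, and the second one is essentially the statement to be proven. The assertion that ``$G[b]$ stabilizes every $N$-orbit of $b$-Brauer pairs'' is not a formal consequence of the definition; and even granting it, the Frattini argument only yields $G=N\NNN_G(D,c)\subseteq N\NNN_G(D)$. Your upgrade to $G=N\Cent_G(D)$ then rests on the claim that the inertial quotient $\NNN_G(D,c)/\Cent_G(D)$ is already realized inside $N$, which is precisely the content of Dade's Corollary 12.6 (equivalently Theorem 3.13 of \cite{Murai_Dade}): $G=G[b]$ forces $G=N\Cent_G(D_0)$ for a defect group $D_0$ of $b$. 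You cannot ``exploit'' this; it is the theorem, and as written your argument is circular at that point. The paper's route is much shorter and avoids Brauer pairs entirely: since $(b')^N=b$, Kn\"orr's theorem (\cite[Theorem (4.14)]{Navarro}) provides a defect group $D_0$ of $b$ with $D\subseteq D_0$; Dade's Corollary 12.6 gives $G=N\Cent_G(D_0)$; and then
\[
G=N\Cent_G(D_0)\subseteq N\Cent_G(D)\subseteq N\NNN_G(D)\subseteq G ,
\]
because $D\subseteq D_0$ implies $\Cent_G(D_0)\subseteq\Cent_G(D)$. If you want a self-contained proof of (c), the result you must actually establish is Dade's Corollary 12.6; your current plan assumes it.
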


\begin{proof} Part (a) is given by  (13.6) and Proposition 13.3  of \cite{DadeBlockExtensions}.
Part (b) follows from Dade's definition of 
$G[b]$ given in (2.9a) of \cite{DadeBlockExtensions}, see also Proposition \ref{prop_def_alt}. For (c) 
we note that there is a defect group $D_0$ of $b$ with 
$D\subseteq D_0$, see Theorem (4.14) of \cite{Navarro}.
Then, the assumption $G = G[b]$ implies that
$G = N\Cent_G(D_0)$ 
by  Corollary 12.6 of \cite{DadeBlockExtensions}, 
see also Theorem 3.13 and Remark 3.14 of \cite{Murai_Dade}. Hence 
$G = N\Cent_G(D_0) \subseteq N\Cent_G(D) 
\subseteq N\NNN_G(D)$.
\end{proof}

This group $G[b]$ has the following behavior when going to quotients of $G$. 

\begin{lem}\label{lem3_5}
Let $N\lhd G$, $Z\leq \Z(G)$ with $N\cap Z=1$
and $b \in\Bl(N)$. Let $\o G:=G/Z$. Then $\o G[\o b]=G[b]/Z$ 
where $\o b\in \Bl((N \times Z)/Z)$ is the block associated with $b$ via the canonical isomorphism 
between $N$ and $(N\times Z )/Z$. 
\end{lem}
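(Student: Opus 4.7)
The plan is to establish two claims: (i) $Z \subseteq G[b]$, and (ii) for every $g \in G$, the equivalence $g \in G[b] \iff gZ \in \bar G[\bar b]$, where $\pi \colon G \to \bar G$ is the canonical surjection. Together these yield $G[b] = \pi^{-1}(\bar G[\bar b])$ and hence $G[b]/Z = \bar G[\bar b]$.

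For (i), any $z \in Z$ satisfies $z \in G_b$ since $Z$ is central in $G$ and thus acts trivially on $\Bl(N)$. Choosing $y := z \in zN$ and any block $b^{(z)} \in \Bl(\spann<N,z> \mid b)$, one has $\Cl_{\spann<N,z>}(z) = \{z\}$ because $z$ is central, while $\la_{b^{(z)}}(\{z\}^+)^* = \omega_\chi(z)^* \neq 0$ in $k$ for any $\chi \in \Irr(b^{(z)})$, since $\omega_\chi(z)$ is a root of unity.

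For (ii), I appeal to the ideal characterization of $G[b]$ from Proposition \ref{prop_def_alt}, namely $G[b] = \{g \in G \mid (e\calO N g)(e\calO N g^{-1}) = e\calO N\}$, where $e$ is the central-primitive idempotent of $\calO N$ corresponding to $b$. Since $N \cap Z = 1$, the restriction $\pi|_N \colon N \to \bar N$ is a group isomorphism, so the induced algebra map $\pi|_{\calO N} \colon \calO N \to \calO \bar N$ is an isomorphism carrying $e$ to the central-primitive idempotent $\bar e$ of $\bar b$; in particular $\pi$ restricts to an isomorphism $e\calO N \xrightarrow{\sim} \bar e\calO\bar N$. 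For any $g \in G$ we have $\pi(e\calO N g) = \bar e\calO\bar N \bar g$ (and similarly for $g^{-1}$), so applying the ring homomorphism $\pi$ to the defining equation of $G[b]$ yields the corresponding equation for $\bar g$, giving the forward direction of (ii). For the converse, one checks that the product $(e\calO N g)(e\calO N g^{-1})$ always lies inside $e\calO N$ (a direct computation using that for $g \in G_b$ the idempotent $e$ commutes with $g$ in $\calO G$), so that $\pi$ restricted to this product is injective; then the equality of the images in $\bar e\calO\bar N$ forces the equality of the pre-images via the isomorphism $\pi|_{e\calO N}$.

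The main obstacle is the backward direction: one must verify cleanly that $(e\calO N g)(e\calO N g^{-1}) \subseteq e\calO N$ and then exploit the isomorphism $\pi|_{e\calO N}$ to recover the equation in $\calO G$ from its image in $\calO \bar G$. The forward direction and the inclusion $Z \subseteq G[b]$ are comparatively formal, reducing to the functoriality of block idempotents and central characters under the quotient by the central subgroup $Z$.
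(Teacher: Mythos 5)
Your argument is correct, but it follows a genuinely different route from the paper's. The paper works entirely with Murai's character-theoretic description of $G[b]$ (the nonvanishing condition $\la_{b^{(x)}}(\Cl_{\spann<N,x>}(y)^+)\neq 0$): it constructs an extension $\w\chi$ of $\chi\in\Irr(b)$ to $\spann<N,Z,x>$ with $Z\leq\ker(\w\chi)$, observes that $\Cl_{\spann<N,x>}(y)=\Cl_{\spann<N,Z,x>}(y)$ for $y\in Nx$ because $Z$ is central, and then matches the central character values of $\w\chi$ with those of the deflated character $\kappa$ of $\spann<N,Z,x>/Z$. You instead invoke Dade's ideal-theoretic characterization recorded in Proposition~\ref{prop_def_alt} and exploit that, since $N\cap Z=1$, the projection $\pi$ restricts to an algebra isomorphism $e\calO N\to\bar e\calO\bar N$; the forward implication is then functoriality of $\pi$ on products of $\calO$-submodules, and the converse follows from the inclusion $(e\calO Ng)(e\calO Ng^{-1})\subseteq e\calO N$ together with injectivity of $\pi$ on $e\calO N$. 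Your approach is more algebraic and arguably cleaner, as it avoids constructing extensions and handling class sums; the paper's approach stays inside the central-character framework it uses throughout Section~\ref{sec3}. Two small points you should make explicit: in the converse, first deduce $g\in G_b$ from $\bar g\in\bar G_{\bar b}$ (the actions on $\Bl(N)$ and $\Bl(\bar N)$ correspond under $\pi|_N$), or note that for $g\notin G_b$ the product $(e\calO Ng)(e\calO Ng^{-1})$ vanishes by orthogonality of the block idempotents $e$ and $geg^{-1}$ of $\calO N$, so the equation cannot hold; and observe that claim (i) is in fact a consequence of claim (ii) applied to $g\in Z$, though proving it directly as you do makes the well-definedness of $G[b]/Z$ transparent.
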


\begin{proof}
Let $x\in G$, $\chi\in\Irr(b)$ and $\o\chi \in \Irr(NZ/Z)$, 
the character associated to $\chi$.
If $\chi$ is not $x$-invariant or equivalently 
$\o\chi$ is not $xZ$-invariant then  $x\notin G[b]$ 
and $xZ\notin \o G[\o b]$ because of Lemma \ref{KueWatanabe}(a).
 
Otherwise there exists an extension $\w \chi\in\Irr(\spann<N,Z,x>)$ 
 of $\chi$ with $Z\leq \ker(\w\chi)$. 
(Here $\w\chi$ exists since 
 $\spann<N,Z,x>/\spann<N,Z>$ is cyclic.) 

Let $y\in Nx$. Since $Z\leq \Z(G)$, the sets 
$\Cl_{\spann<N,x>}(y)$ and $\Cl_{\spann<N,Z,x>}(y)$ coincide. 
This proves
\[\la_{\w\chi}(\Cl_{\spann<N,Z,x>}(y)^+)
= \la_{\w\chi_{\spann<N,x>}}(\Cl_{\spann<N,x>}(y)^+).\]
Further $\w\chi$ defines $\kappa\in \Irr(\spann<N,Z,x>/Z)$ 
and $\bl(\kappa)$ covers $\o b$. By definition we see that 
$\la_{\kappa}(\Cl_{\spann<N,Z,x>/Z}(yZ)^+)
= \la_{\w\chi}(\Cl_{\spann<N,Z,x>}(y)^+)$.

If $x \in G[b]$, then there exists some $y\in Nx$ 
such that $\la_{\w\chi_{\spann<N,x>}}(\Cl_{\spann<N,x>}(y)^+)\neq 0$, 
and hence  $xZ\in \o G[\o b]$. Analogously if $x \in \o G[\o b]$, 
then there exists some $yZ\in NZx/Z$ such that
$\la_{\kappa}(\Cl_{\spann<N,Z,x>/Z}(yZ)^+)\neq 0$, 
and hence  $\la_{\w\chi_{\spann<N,x>}}(\Cl_{\spann<N,x>}(yZ)^+)\neq 0$. 
This implies that $\o G[\o b]=G[b]/Z$.
\end{proof}

We recall the following fact about characters belonging to isomorphic blocks. 

\begin{lem}\label{IsomorphicBlock}
Assume that $N\lhd G$ with $p \nmid |G/N|$, $b\in\Bl(N)$,   
$B\in\Bl(G\mid b)$ and $G = G[b]$. In addition, if there is 
$\wphi \in \IBr(B)$ with $\wphi_N\in\IBr(b)$, or $\wchi \in \Irr (B)$ 
with $\wchi_N\in\Irr(b)$, 
then $B$ and $b$ are isomorphic via restriction, 
and hence $b \cong B{\downarrow}_{N\times N}$.
\end{lem}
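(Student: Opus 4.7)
Plan: By Lemma~\ref{KueWatanabe_a}, the assumption $G=G[b]$ forces every $\phi\in\IBr(b)$ (respectively every $\chi\in\Irr(b)$) to be $G$-invariant, so the primitive central idempotent $e\in\Z(kN)$ associated to $b$ is already central in $kG$. Consequently $e\cdot kG=\bigoplus_{B'\in\Bl(G\mid b)}B'$ is a decomposition into two-sided ideals, and the plan is to single out $B$ within this decomposition and exhibit it as $b$ on the nose as a $k[N\times N]$-module.

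Under the hypothesis $G=G[b]$, Dade's theory (see \cite{DadeBlockExtensions} and Proposition~\ref{prop_def_alt}) identifies $e\cdot kG$ with a crossed product $b*_{\alpha}(G/N)$ for some $2$-cocycle class $[\alpha]\in H^2(G/N,k^{\times})$ attached to the ramification of $b$ under $G$. The existence of the extension $\wphi\in\IBr(G)$ of $\phi$ (or $\wchi\in\Irr(G)$ of $\chi$) gives a splitting of the projective representation of $G/N$ whose obstruction is $[\alpha]$, hence trivialises $\alpha$. After this trivialisation one has $e\cdot kG\cong b\otimes_k k[G/N]$ as $k$-algebras, and as $k[N\times N]$-modules with $N\times N$ acting trivially on the second tensor factor (since $N$ lies in the kernel of $G\twoheadrightarrow G/N$).

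Since $p\nmid [G:N]$, the algebra $k[G/N]$ is semisimple, $k[G/N]\cong\bigoplus_{\psi\in\Irr(G/N)}M_{\psi(1)}(k)$, so the blocks of $e\cdot kG$ are the subalgebras $B_{\psi}:=b\otimes_k M_{\psi(1)}(k)$ for $\psi\in\Irr(G/N)$; as a $k[N\times N]$-module, $B_{\psi}\cong b^{\oplus\psi(1)^2}$. By (the Brauer-character version of) Gallagher's theorem, which applies because $p\nmid [G:N]$, every $\eta\in\IBr(B_{\psi}\mid\phi)$ has degree $\eta(1)=\phi(1)\psi(1)$. Our hypothesis supplies $\wphi\in\IBr(B)$ with $\wphi(1)=\phi(1)$, forcing the corresponding $\psi$ to be linear, i.e.\ $\psi(1)=1$. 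Hence $B\cong b\otimes_k k\cong b$, and in particular $B{\downarrow}_{N\times N}\cong b$ as $k[N\times N]$-modules; the isomorphism is realised by the projection $\pi_N\colon kG\to kN$, $\sum_g c_g g\mapsto\sum_{g\in N}c_g g$, which is $k[N\times N]$-linear and injective on $B$ by the resulting dimension match. The ordinary-character case is entirely analogous, invoking the usual Gallagher theorem for $\Irr$ instead.

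The main obstacle is to make precise the Dade crossed-product decomposition $e\cdot kG\cong b*_{\alpha}(G/N)$ under $G=G[b]$ and to verify rigorously that the existence of the character extension trivialises the cocycle $\alpha$ in both the ordinary and the Brauer settings; this is the standard cohomological content of ``extending $\phi$ is equivalent to splitting the projective representation of $G/N$ attached to $b$''. Once this decomposition is in hand, the semisimplicity of $k[G/N]$ coming from $p\nmid [G:N]$ together with the degree constraint forced by the extension immediately pin $B$ down as the principal-block summand of $e\cdot kG$, yielding the claimed isomorphism.
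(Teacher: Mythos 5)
Your overall strategy runs in the same circle of ideas as the paper's proof, but the decisive step is left as an acknowledged ``obstacle'', and that step is precisely where the mathematical content lies. The paper first invokes Fong's theorem (Theorem 5.5.16(ii) of \cite{NagaoTsushima}) to conclude that $b$ and $B$ share a defect group --- a use of $p\nmid|G/N|$ that appears nowhere in your write-up --- and then cites K\"ulshammer's Theorem 7 (equivalently Theorem 3.5 of \cite{HidaKoshitani}) for exactly the structural statement you are trying to rederive: under $G=G[b]$ and equality of defect groups, $B$ and $b$ are \emph{naturally Morita equivalent of degree $n$}, i.e.\ $B\cong b\otimes_k \Mat_n(k)$ compatibly with the embedding of $kN$. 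Your closing move --- the extension $\wphi$ has degree $\phi(1)$, which forces $n=1$ --- coincides with the paper's, which then reads off the conclusion from Theorem 4.1(7) (resp.\ Proposition 2.4) of \cite{HidaKoshitani}.

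The gap is the assertion that $G=G[b]$ yields $e\cdot kG\cong b*_{\alpha}(G/N)$ with $[\alpha]\in H^2(G/N,k^{\times})$ and that extending $\phi$ trivialises \emph{this} $\alpha$. What $G=G[b]$ gives directly is that $\Cent_{ekG}(kN)$ is a crossed product of $\Z(b)$ with $G/N$, so the natural cocycle takes values in $\Z(b)^{\times}$, not in $k^{\times}$; pushing it down to $k^{\times}$ requires an argument (e.g.\ that $1+J(\Z(b))$ is a $p$-group together with $p\nmid|G/N|$). One must then (i) show that the summand of $\Cent_{ekG}(kN)$ attached to $B$ is a full matrix algebra over $k$ --- this is where the common defect group enters and where K\"ulshammer's hypothesis is used --- and (ii) identify the block-theoretic cocycle with the character-theoretic obstruction to extending $\phi$, so that the existence of $\wphi$ really kills it; only then is the $k[N\times N]$-module identification $B_{\psi}\cong b^{\oplus\psi(1)^2}$ and the Gallagher bookkeeping available. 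Neither (i) nor (ii) is formal; both constitute the content of the results of K\"ulshammer and Hida--Koshitani that the paper cites. A smaller point: injectivity of the projection $\pi_N$ on $B$ does not follow from a ``dimension match'' alone; in \cite{HidaKoshitani} the isomorphism is exhibited as $\beta\mapsto\beta e_B$ with $\pi_N$ as its scaled inverse. As written, your argument is therefore a plausible outline of the known proof rather than a proof.
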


\begin{proof}
We know by Fong's result Theorem 5.5.16(ii) of \cite{NagaoTsushima} 
that $b$ and $B$ have a common defect group since $p \nmid |G/N|$. 
Hence, it follows from Theorem 7 of \cite{Kuelshammer} 
(see Theorem 3.5 of \cite{HidaKoshitani}) that $B$ and $b$ 
are naturally Morita equivalent of degree $n$ for some integer $n \geq 1$. 
Assume that the first case occurs, namely, that there is 
$\wphi \in \IBr (B)$ with $\wphi_N \in \IBr(b)$. Then
it holds $n = 1$. Therefore we get the assertion 
by Theorem 4.1(7) of \cite{HidaKoshitani}. 
For the second case, a similar argument works 
by Proposition 2.4 of \cite{HidaKoshitani}.
\end{proof}  

Under additional conditions, extensions of characters can 
be uniquely determined by a block.

\begin{lem}\label{lem3_7}
Let $N\lhd G$, $b\in\Bl(N)$, and $B\in\Bl( G\mid b)$. 
Assume that $G/N$ is cyclic and $G[b]= G$.
For every  $\phi\in\IBr(b)$ there exists a unique extension 
$\widetilde \phi\in\IBr(B)$ of $\phi$.
\end{lem}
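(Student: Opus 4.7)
The plan is a reduction to the case of $p'$-index via the Sylow $p$-subgroup of the cyclic quotient $G/N$, where Lemma~\ref{IsomorphicBlock} becomes directly applicable. Let $P$ be the subgroup of $G$ with $N\leq P\leq G$ and $P/N$ the Sylow $p$-subgroup of $G/N$; as $G/N$ is cyclic, $P\lhd G$ and $G/P$ is cyclic of order coprime to $p$. Since $\phi$ is $G$-invariant by Lemma~\ref{KueWatanabe}(a) and $P/N$ is a cyclic $p$-group with $\IBr(P/N)=\{1_{P/N}\}$, $\phi$ admits a unique extension $\w\phi_P\in\IBr(P)$ (Gallagher-type extension for Brauer characters); set $b_P:=\bl(\w\phi_P)$. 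Because $P/N$ is a $p$-group and $P\subseteq G_b$, $b_P$ is the only block of $P$ covering $b$, and therefore $e_{b_P}=e_b$ in $\calO P$ (the idempotent $e_b\in\Z(\calO G)$ decomposes in $\calO P$ only through blocks of $P$ over $b$).

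I next verify $G=G[b_P]$ via Proposition~\ref{prop_def_alt}. For every $g\in G$, using $g e_b=e_b g$ (centrality of $e_b$ in $\calO G$), $g\calO P g^{-1}=\calO P$ (since $P\lhd G$), and $e_b^2=e_b$ with $e_b$ central in $\calO P$:
\[
(e_{b_P}\calO P\, g)(e_{b_P}\calO P\, g^{-1}) = e_b\calO P\,(g e_b)\,\calO P\, g^{-1} = e_b\calO P\cdot e_b\cdot(g\calO P g^{-1}) = e_b^2\calO P = e_{b_P}\calO P.
\]
Hence $G\subseteq G[b_P]$, and the reverse inclusion is automatic.

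Now $B$ covers $b_P$ by uniqueness of $b_P$ over $b$, and by Lemma~\ref{KueWatanabe}(a) applied to $G=G[b_P]$ every character of $b_P$ is $G$-invariant; combined with $G/P$ cyclic, these extend to $G$, so for any $\psi\in\IBr(B)$ Clifford theory gives $\psi|_P\in\IBr(b_P)$ with no ramification. With $p\nmid|G/P|$, the triple $P\lhd G$, $b_P$, $B$ therefore fulfills the hypotheses of Lemma~\ref{IsomorphicBlock}, which yields that restriction induces a bijection $\IBr(B)\to\IBr(b_P)$. The unique $\w\phi\in\IBr(B)$ with $\w\phi|_P=\w\phi_P$ then satisfies $\w\phi|_N=\phi$, providing the desired extension of $\phi$ in $B$. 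For uniqueness, any extension $\w\psi\in\IBr(B)$ of $\phi$ has $\w\psi|_P\in\IBr(b_P)$ extending $\phi$, hence equal to $\w\phi_P$ by the first-paragraph uniqueness; the restriction bijection then forces $\w\psi=\w\phi$.

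The main obstacle is recognizing that the Sylow $p$-reduction at the level of the cyclic quotient $G/N$ identifies the block idempotents $e_{b_P}=e_b$ in $\calO P$, turning the verification $G=G[b_P]$ into a short calculation and making Lemma~\ref{IsomorphicBlock} (stated only under the $p'$-index hypothesis) directly applicable.
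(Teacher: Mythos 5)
Your proof is correct, but it takes a genuinely different route from the paper's, most visibly for uniqueness. The paper gets existence in one line from Clifford theory (since $B$ covers $b$, $\phi$ is $G$-invariant and $G/N$ is cyclic, any $\w\phi\in\IBr(B)\cap\IBr(G\mid\phi)$ is already an extension), and proves uniqueness by a direct central-character computation: from the definition of $G[b]$ it extracts an element $z$ such that $\langle zN\rangle$ is the Hall $p'$-subgroup of $G/N$ and $\la_B(\Cl_G(z)^+)\neq 0$; two extensions in $B$ differ by some $\zeta\in\IBr(G/N)$, and evaluating $\la_B$ at $\Cl_G(z)^+$ forces $\zeta(z)^*=1$, hence $\zeta=1$. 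You instead reduce modulo the Sylow $p$-part of $G/N$: the intermediate group $P$ carries a unique extension $\w\phi_P$ of $\phi$ and a unique block $b_P$ over $b$ with $e_{b_P}=e_b$; your idempotent computation (which really only needs $e_b$ central in $\calO G$, i.e.\ $G_b=G$) gives $G=G[b_P]$ via Proposition \ref{prop_def_alt}; and Lemma \ref{IsomorphicBlock} --- whose hypotheses you correctly verify, including that every $\psi\in\IBr(B)$ restricts irreducibly to $P$ because its constituents over $\phi$ lie in the $G$-invariant block $b_P$ and are themselves $G$-invariant by Lemma \ref{KueWatanabe}(a) --- supplies the restriction bijection $\IBr(B)\to\IBr(b_P)$ settling existence and uniqueness at once. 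The paper's route buys self-containedness (nothing beyond Gallagher and the defining nonvanishing of $\la_B$), while yours leans on the K\"ulshammer/Hida--Koshitani Morita machinery behind Lemma \ref{IsomorphicBlock}; in exchange you obtain the stronger intermediate facts that restriction $\IBr(B)\to\IBr(b_P)$ is bijective and that $G=G[b_P]$ holds automatically for the Sylow-intermediate block, very much in the spirit of Lemmas \ref{prop3_9} and \ref{prop3_10}. I see no gaps.
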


\begin{proof}
Since $G = G[b]$, $b$ is $G$-invariant. Hence $\phi$ is $G$-invariant
by Lemma \ref{KueWatanabe}(a).
Since $B$ covers $b$,
there exists some $\w\phi\in\IBr(G\mid \phi)\cap \IBr(B)$ 
and this is an extension of $\phi$ since $G/N$ is cyclic,
namely $\w\phi_N = \phi$, 
see Corollary (8.20) and Theorem (9.2) of \cite{Navarro}.

Next, we show the uniqueness of $\w\phi$.
Let $\w\phi' \in \IBr(B)$
such that $(\w\phi')_N = \phi$. By the definition of $G[b]$
there is an element $y \in G$ with $\langle N,y\rangle = G$
and $\lambda_B(\Cl_G(y)^+) \not= 0$. Since $G/N = \langle yN\rangle$
is cyclic, there exists some $y' \in G$ such that 
$\langle y'N\rangle$ is the Hall $p'$-subgroup of $G/N$. 
Then we can write $y'N = y^iN$ for
some $i\in\mathbb Z$. Thus,
$0 \not= \Big( \lambda_B(\Cl_G(y)^+) \Big)^i
       = \lambda_B\Big( (\Cl_G(y)^+ )^i \Big)$.
It is easily seen that
$(\Cl_G(y)^+ )^i \in \sum_{z\in y^iN}\mathbb Z \Cl_G(z)^+$,
see the proof of Corollary 3.3 of \cite{Murai_Dade}.
Hence we can write 
$(\Cl_G(y)^+)^i = \sum_{z\in y^iN}a_z{\cdot}(\Cl_G(z)^+)$
for some integers $a_z$, so that
$0 \not= \sum_{z\in y^iN}a_z^*{\cdot}\lambda_B(\Cl_G(z)^+)$.
Thus there exists an element $z\in y^iN$ such that
$\lambda_B(\Cl_G(z)^+) \not= 0$. Obviously,  $zN = y^iN = y'N$.

Now, since $\w\phi$ and $\w\phi'$ are both extensions of $\phi$,
there is a linear character $\zeta\in\IBr(G/N)$ with $\w\phi' = \w\phi\zeta$,
see Corollary (8.20) of \cite{Navarro}.
Clearly $\zeta$ is linear since $G/N$ is cyclic.
Hence,
\begin{align*}
0 &\not= \ \lambda_B(\Cl_G(z)^+)
= \lambda_{\w\phi'}(\Cl_G(z)^+)
= \lambda_{\w\phi\zeta}(\Cl_G(z)^+)=
\\
&= \lambda_{\w\phi}(\Cl_G(z)^+){\cdot}\zeta(z)^*
 = \lambda_B(\Cl_G(z)^+){\cdot}\zeta(z)^*.
\end{align*}
Hence $ \lambda_B(\Cl_G(z)^+) \not= 0$ and $\zeta(z)^* = 1 \in k$. Thus,
$$ 1 = \zeta(z)^* = \zeta(zN)^* = \zeta(y^iN)^* = \zeta(y'N)^*.$$
Hence $\zeta = 1_{G/N}$ since $\langle y'N\rangle$ is a 
Hall $p'$-subgroup of $G/N$. This implies $\w\phi' = \w\phi$.
\end{proof}

\begin{lem}\label{prop_cor3_3}
Let $N \lhd G$, $H\leq G$ and $M:=N\cap H$. 
Let $b'\in \Bl(M)$ be a block that has a defect group 
$D$ with $\Cent_G(D)\subseteq H$. 
If $G[b]=G$ for $b:=(b')^N\in \Bl(N)$, then $H= H[b']$.
\end{lem}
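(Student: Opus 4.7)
The plan is to show, for an arbitrary $h \in H$, that $h \in H[b']$. The strategy has three phases: (i) reduce to the case where $G = \langle N, h\rangle$ (and $H = \langle M, h\rangle$), so that $G/N$ and $H/M$ are cyclic; (ii) verify that $b'$ is $H$-invariant, so $H_{b'} = H$; and (iii) use Lemma~\ref{lem2_3}(b) together with the induced-block formula to transport a non-vanishing value of the central character of some $B \in \Bl(G\mid b)$ on a $G$-class contained in $hN$ down to a non-vanishing value of the central character of some $B' \in \Bl(H\mid b')$ on an $H$-class contained in $hM$.

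For phase (i), let $G_0 := \langle N, h\rangle$ and $H_0 := \langle M, h\rangle = G_0 \cap H$. Lemma~\ref{KueWatanabe}(b) gives $G_0 = G_0[b]$; one has $\Cent_{G_0}(D) = \Cent_G(D) \cap G_0 \subseteq H_0$, and the defining conditions for $h \in H_0[b']$ and for $h \in H[b']$ coincide, since both concern the same subgroup $\langle M, h\rangle$. So we may replace $G$ by $G_0$ and $H$ by $H_0$, and assume $G = \langle N, h\rangle$. Then Lemma~\ref{KueWatanabe}(c) gives $G = N\Cent_G(D) \subseteq NH$, whence $G = NH$; the Dedekind identity yields $H = M\Cent_G(D)$, and $H/M \cong G/N = \langle hN\rangle$ is cyclic, generated by $hM$. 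For phase (ii), writing $h = mc$ with $m\in M$ and $c\in \Cent_G(D)$, one has $(b')^h = (b')^c$; since $c$ centralizes the defect group $D$ of $b'$ and $b = (b')^N$ is $G$-invariant, a standard Brauer-pair argument (or appeal to Dade's ramification theory) yields $(b')^c = b'$, whence $h \in H_{b'}$.

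For phase (iii), since $G = G[b]$ there exist $B \in \Bl(G\mid b)$ and $y_0 \in hN$ with $\la_B(\Cl_G(y_0)^+) \neq 0$. Applying Lemma~\ref{lem2_3}(b) with $Q := D$ (using $D \leq M \leq H$, $D\Cent_G(D) \subseteq H$, that $B$ covers $b$, and that $b'$ has defect group $D$ with $(b')^N = b$) produces a block $B' \in \Bl(H\mid b')$ with $(B')^G = B$. The defining equation for induced blocks then gives
\[ \la_{B'}\bigl((\Cl_G(y_0)\cap H)^+\bigr) = \la_B(\Cl_G(y_0)^+) \neq 0. \]
Since $G/N$ is abelian, $\Cl_G(y_0) \subseteq y_0N = hN$, so $\Cl_G(y_0) \cap H \subseteq hN \cap H = hM$. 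Decomposing into $H$-conjugacy classes, some $y \in hM$ satisfies $\la_{B'}(\Cl_H(y)^+) \neq 0$; taking $(b')^{(h)} := B' \in \Bl(\langle M, h\rangle \mid b')$ exhibits $h \in H[b']$. The main obstacle is the verification in phase (ii) that $\Cent_G(D)$ fixes $b'$, which is essentially the reason the hypothesis $\Cent_G(D) \subseteq H$ enters the statement; once granted, the rest is a direct Harris-Kn\"orr transport of the $G[b]$-condition down to the $H[b']$-condition.
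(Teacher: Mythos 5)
Your overall route is the paper's: work inside $\langle N,h\rangle$, use Lemma \ref{lem2_3}(b) to produce a block $B'\in\Bl(\langle M,h\rangle\mid b')$ with $(B')^{\langle N,h\rangle}=B$, and transport the non-vanishing of $\la_B$ on a class in $hN$ down to a class in $hM$. Two steps, however, are not justified as written.

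First, Lemma \ref{lem2_3}(b) does not take ``$B$ covers $b$'' as a hypothesis; it requires $b\mid (B{\downarrow}_{N\times N})$, and these are not equivalent. The multiplicity of $b$ as a bimodule summand of $kG{\downarrow}_{N\times N}$ is governed by $|G[b]:N|$ and is shared among the blocks covering $b$, so a covering block need not contain $b$ as a direct summand of its restriction. In your reduced situation the implication does hold, but precisely because $\langle N,h\rangle=\langle N,h\rangle[b]$ (which you already have from Lemma \ref{KueWatanabe}(b)); this is the content of Proposition 3.4(ii) of \cite{HidaKoshitani}, which is exactly what the paper invokes at this point. You need that citation (or an equivalent argument), not the covering property, to legitimize the application of Lemma \ref{lem2_3}(b).

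Second, phase (ii) is both unsound and unnecessary. An element $c\in\Cent_G(D)$ centralizing a defect group of $b'$ need not stabilize $b'$, and knowing $((b')^c)^N=b$ does not force $(b')^c=b'$, since $D$ need not be a defect group of $b$ and block induction to $N$ is not injective; so the ``standard Brauer-pair argument'' you gesture at does not exist in this generality. Fortunately you do not need it: Lemma \ref{lem2_3}(b) makes no invariance assumption on $b'$, and once you have $\la_{B'}(\Cl_{\langle M,h\rangle}(z)^+)\neq 0$ for some $z\in hM$ with $B'$ covering $b'$, the condition $h\in H_{b'}$ comes for free --- if $b'$ were not $h$-invariant, Fong--Reynolds would force $\la_{B'}$ to vanish on every class contained in $hM$. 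This is built into Murai's characterization (Corollary 3.3 of \cite{Murai_Dade}) on which the definition of $H[b']$ rests. Deleting phase (ii) and repairing the hypothesis check in phase (iii) as above turns your argument into the paper's proof.
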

Note that this statement follows from Corollary 12.6 of 
\cite{DadeBlockExtensions} in the case where $b$ and 
$b'$ have a common defect group. 
\begin{proof}
Let $h\in H$. Since $h\in G[b]$, there exists a block 
$B\in\Bl(\spann<N,h> | b )$ and $y\in hN$ such that 
$\la_B(\Cl_{\spann<N,h>}(y)^+)\neq 0$.
Clearly, $\spann<N,h>/N$ is cyclic,
and $b$ is $\spann<N,h>$-invariant since $h \in G[b]$.
Moreover, according to the above and Corollary 3.3 of \cite{Murai_Dade}, 
it holds $\spann<N,h>[b] = \spann<N,h>$.
Hence, by Proposition 3.4(ii) of \cite{HidaKoshitani}, 
$b\mid (B{\downarrow}_{N\times N})$.
Hence, by Lemma \ref{lem2_3}(b) there exists a block 
$B'\in\Bl({\spann<M,h>}\mid b')$ with $(B')^{\spann<N,h>}=B$.
The block $B'$ satisfies 
\[ \la_{B'}\Big((\Cl_{\spann<N,h>}(y)\cap \spann<M,h>)^+\Big)
  =\la_B(\Cl_{\spann<N,h>}(y)^+)\neq 0 .\]
This proves that there exists an element 
$z\in hM$ with $\la_{B'}(\Cl_{\spann<M,h>}(z)^+)\neq 0$. 
This implies that $h\in H[b']$. Hence $H =H[b']$.
\end{proof}

The following lemma shows 
how a character correspondence can be deduced
from existence of a character of $G$ 
with several properties. 

\begin{lem}\label{prop3_9}
Let $N \lhd G$, $H \leq G$, and $M:=N\cap H$. 
Let $b'\in \Bl(M)$ be a block that has a defect group $D$ 
with $\Cent_G(D)\subseteq H$. Assume that $G = G[b]$ for 
$b:=(b')^N$, and $p\nmid |G/N|$. Suppose that there is some 
$\wtheta\in\IBr(G)\cup\Irr(G)$ such that 
$\theta:=\wtheta_N\in\IBr(b)\cup \Irr(b)$. 
Then there is the unique block $B'\in\Bl(H\mid b')$ with 
$(B')^G=\bl(\wtheta)$, and such that
\enumalph
\begin{enumerate}
	\item the map $\wphip\mapsto\wphip_M$ 
defines a bijection $\IBr(B') \rightarrow \IBr(b')$.
	 \item the map $\w\chi'\mapsto\w\chi'_M$ 
defines a bijection $\Irr(B') \rightarrow \Irr(b')$.
\end{enumerate}
\end{lem}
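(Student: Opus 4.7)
The plan is to let $B:=\bl(\wtheta)$, use Lemma~\ref{IsomorphicBlock} to identify $B$ with $b$ via restriction, apply Lemma~\ref{lem2_3}(b) to produce $B'$, and finally transport the identification to the $(B',b')$ side to extract the bijections in (a), (b). As preliminaries I record that $H/M \cong NH/N \leq G/N$ gives $p \nmid |H/M|$, and that Lemma~\ref{prop_cor3_3} yields $H = H[b']$.

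Next I set $B:=\bl(\wtheta)$. Since $\wtheta_N \in \IBr(b) \cup \Irr(b)$, we have $B \in \Bl(G \mid b)$ and the hypotheses of Lemma~\ref{IsomorphicBlock} are met for $G \trianglerighteq N$. Consequently $b \cong B{\downarrow}_{N\times N}$ as $k[N\times N]$-modules, and restriction gives bijections $\IBr(B) \to \IBr(b)$ and $\Irr(B) \to \Irr(b)$. Taking $Q:=D$ in Lemma~\ref{lem2_3}(b), whose hypothesis $Q\Cent_G(Q) \subseteq H$ holds by $D \subseteq M \subseteq H$ and $\Cent_G(D) \subseteq H$, I obtain some $B' \in \Bl(H \mid b')$ with $(B')^G = B$, settling existence.

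The central step is to verify the restriction bijections in (a) and (b). Since $H=H[b']$ and $p\nmid |H/M|$, K\"ulshammer's Theorem~7 of~\cite{Kuelshammer} provides a natural Morita equivalence between $B'$ and $b'$ of some degree $n'\geq 1$; the goal is $n'=1$, after which Lemma~\ref{IsomorphicBlock} applied to $H\trianglerighteq M$ and $B'$ gives the two bijections by restriction. To produce the required character in $B'$ restricting irreducibly to $M$, the plan is to follow $\wtheta$ through the summand decomposition of $B{\downarrow}_{H\times H}$ from Lemma~\ref{lem2_3}(a): under $(B')^G=B$ the central primitive idempotent of $B'$ in $\calO H$ is related by multiplication to that of $B$ in $\calO G$, and the irreducibility of $\wtheta_N$ on the $G$-side transports to an irreducible restriction of the corresponding character in $B'$ on the $H$-side.

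For uniqueness of $B'$: suppose $B''\in\Bl(H\mid b')$ also satisfies $(B'')^G=B$. By the argument just sketched applied to $B''$, restriction gives a bijection $\IBr(B'')\to\IBr(b')$, so for each $\phi'\in\IBr(b')$ there are unique extensions $\wphip\in\IBr(B')$ and $\wphip''\in\IBr(B'')$ of $\phi'$, differing by a linear character $\zeta\in\IBr(H/M)$. Comparing $\bl(\wphip)^G$ and $\bl(\wphip'')^G$ via Passman's central-character formula (Theorem~(9.5) of~\cite{Navarro}) and using $(B')^G = B = (B'')^G$ will force $\zeta = 1_{H/M}$, whence $B'=B''$. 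The hardest piece throughout is the claim $n'=1$: transferring the K\"ulshammer Morita degree from the $(B,b)$ side to the $(B',b')$ side under block induction requires a careful comparison of the two natural Morita equivalences, a subtlety amplified by the fact that the defect groups of $b$ and $b'$ need not coincide in our setting.
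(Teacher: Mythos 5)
Your existence setup coincides with the paper's: set $B:=\bl(\wtheta)$, get $B{\downarrow}_{N\times N}\cong b$ from Lemma~\ref{IsomorphicBlock}, and produce $B'\in\Bl(H\mid b')$ with $(B')^G=B$ from Lemma~\ref{lem2_3}(b) with $Q=D$. The genuine gap is in the step you yourself flag as the hardest, namely $n'=1$. Your plan --- ``follow $\wtheta$ through the summand decomposition of $B{\downarrow}_{H\times H}$'' and ``transport the irreducibility of $\wtheta_N$ to an irreducible restriction of the corresponding character in $B'$'' --- is not an argument: there is no canonical character of $B'$ attached to $\wtheta$, and the relation between the block idempotents of $B$ and $B'$ under block induction gives no reason why any character of $B'$ should restrict irreducibly to $M$. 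The paper closes this by a pure multiplicity count, with no character chasing at all. Since $H=H[b']$ (Lemma~\ref{prop_cor3_3}) and $B'$ covers $b'$, Proposition~3.4(ii) of \cite{HidaKoshitani} gives $B'{\downarrow}_{M\times M}\cong n\times b'$. From $B'\mid (B{\downarrow}_{H\times H})$ and $B{\downarrow}_{N\times N}\cong b$ one gets, restricting everything to $M\times M$, that $b'$ occurs in $b{\downarrow}_{M\times M}$, hence in $(kN){\downarrow}_{M\times M}$, with multiplicity at least $n$. But $\Cent_N(D)\subseteq M$, so Theorem~5.10.12(i) of \cite{NagaoTsushima} says $b'$ occurs in $(kN){\downarrow}_{M\times M}$ with multiplicity exactly one; hence $n=1$, and Theorem~4.1 of \cite{HidaKoshitani} yields both restriction bijections. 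This is the step you are missing, and your route does not supply a substitute for it.

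Your uniqueness argument is likewise only an assertion. The claim that Passman's formula ``will force $\zeta=1_{H/M}$'' is exactly what needs proof; the analogous argument in Lemma~\ref{lem3_7} relies on $G/N$ being cyclic to locate an element $z$ generating the Hall $p'$-part with $\la_B(\Cl_G(z)^+)\neq 0$, and $H/M$ need not be cyclic here, so that proof does not transfer. The paper instead reuses the same multiplicity count: if $B_0'\neq B'$ both lie in $\Bl(H\mid b')$ and induce to $B$, then $B_0'\oplus B'$ divides $B{\downarrow}_{H\times H}$, whence $b'\oplus b'$ divides $(kN){\downarrow}_{M\times M}$, contradicting multiplicity one. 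I recommend replacing both of your sketched steps with this counting argument; the rest of your proposal is sound.
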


\begin{proof} Let $B := \bl(\w\theta)$ 
and $b:= \bl(\theta)$. By Lemma \ref{IsomorphicBlock}, 
$B{\downarrow}_{N\times N} \cong b$, via restriction, 
see Definition 4.3 of \cite{HidaKoshitani}. Now, 
by Lemma \ref{lem2_3}(b), there exists a block $B' \in \Bl(H\mid b')$ 
such that $(B')^G = B  $
and hence $B'\mid (B{\downarrow}_{H\times H})$, 
see Theorem 5.10.11 of \cite{NagaoTsushima}.

On the other hand, by Lemma \ref{prop_cor3_3}, 
$H = H_{b'} = H[b']$. Since $B'$ covers $b'$, 
it follows from Proposition 3.4(ii) of \cite{HidaKoshitani} that
$ B'{\downarrow}_{M \times M}$ is the $n$-times direct product
of $b'$, denoted by $  n \times  b'$ 
for an integer $n\geq 1$. Thus,
\begin{align*}
   b{\downarrow}_{M\times M} 
&\cong \ (B{\downarrow}_{N\times N}){\downarrow}_{M\times M} \ 
= \ B{\downarrow}_{M\times M}
\ = \ (B{\downarrow}_{H\times H}){\downarrow}_{M\times M} \ 
\\
&= \ \left (B'\oplus (k[H\times H]\text{-mod})\right ){\downarrow}_{M\times M}
\ \cong \ 
(n \times b') \oplus (k[M\times M]\text{-mod}),
\end{align*}
so that $b'$ is an indecomposable direct summand 
of $b{\downarrow}_{M\times M}$ 
with multiplicity at least $n$. 
Further since $b$ is an indecomposable direct summand of 
$kN$, we see that $b'$ is an indecomposable direct summand of 
$(kN){\downarrow}_{M\times M}$ with multiplicity at least $n$. 
Note that $\Cent_N(D) \subseteq M$. 
By Theorem 5.10.12(i) of \cite{NagaoTsushima}, 
$b'$ is an indecomposable direct summand of 
$(kN){\downarrow}_{M\times M}$ with multiplicity one. 
This proves that $n = 1$, that is, 
$B'{\downarrow}_{M\times M} \cong b'$. 
Thus, again by Theorem 4.1 of \cite{HidaKoshitani}, 
there exists a bijection
\begin{align*}
\IBr(B') \longrightarrow \IBr(b')  &
\text{ with }  \kappa \longmapsto \kappa_M .
\end{align*}
Hence there exists a unique 
$\wphip \in \IBr(B') \, \cap \, \IBr(H\mid \phip)$,
and it satisfies $(\wphip)_N = \phi'$. Similar considerations apply to 
ordinary characters.

Next we show that $B'$ is the unique block 
in $\Bl(H\mid b')$ with $(B')^G = B$. 
Let $B_0'\in \Bl(H\mid b')$ with $(B_0')^G = B$ and $B_0' \not= B'$. 
The considerations above show that 
${B'_0}{\downarrow}_{M\times M}  \cong b'$, 
via restriction.  Then, since $(B_0')^G = B = (B')^G$,
we have $({B_0'}\oplus B') \mid (B{\downarrow}_{H\times H})$, 
see Theorem 5.10.11 of \cite{NagaoTsushima}. Thus,
\begin{align*}
b\downarrow_{M\times M} 
&= (B\downarrow_{N\times N})\downarrow_{M\times M} 
= B\downarrow_{M\times M}
= (B\downarrow_{H\times H})\downarrow_{M\times M} 
\\
&= [B_0' \oplus B' \oplus (k[H\times H]
\text{-mod})]{\downarrow}_{M\times M} = b' 
\oplus b' \oplus (k[M\times M]\text{-mod}),
\end{align*}
so that 
\begin{align*}
 (b'\oplus b') \mid  (b{\downarrow}_{M\times M}) 
\mid (kN){\downarrow}_{M\times M}   & =  b' \oplus (\text{the others}),
 \end{align*}
a contradiction. 
\end{proof}

An analogous result is the following. 

\begin{lem}\label{prop3_10}
Let $N \lhd G$, $H\leq G$, and $M:=N\cap H$. 
Let $b'\in \Bl(M)$ be a block that has a defect group $D$ 
with $\Cent_G(D)\subseteq H$. Assume that $G = G[b]$ for 
$b:=(b')^N$,  and $p\nmid |G/N|$. Suppose that 
there is some $\wthetap\in\IBr(H) \, \cup \, \Irr(H)$ such that 
$\thetap:=\wthetap_M$ is irreducible and belongs to $b'$. 
Then \enumalph
\begin{enumerate}
	\item the map $\w\phi\mapsto\w\phi_N$ defines a bijection
		 $\IBr(\bl(\wthetap)^{ G}) \rightarrow \IBr(b)$.
	 \item the map $\w\chi\mapsto\w\chi_N$ defines a bijection 
	 $\Irr(\bl(\wthetap)^{ G}) \rightarrow \Irr(b)$.
\end{enumerate}
\end{lem}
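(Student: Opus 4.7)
The plan is to mirror the proof of Lemma \ref{prop3_9}, swapping the roles of $(G,N,b,B)$ and $(H,M,b',B')$. First I invoke Lemma \ref{prop_cor3_3} to conclude $H=H[b']$, and observe $p\nmid|H/M|$ since $|H/M|$ embeds into $G/N$. Applying Lemma \ref{IsomorphicBlock} to $M\lhd H$ with block $b'$ and the distinguished character $\wthetap$ (noting that $B':=\bl(\wthetap)$ covers $b'$ because $\wthetap_M=\thetap\in\IBr(b')$), I obtain $B'\cong b'$ via restriction; Theorem 4.1 of \cite{HidaKoshitani} then yields the restriction bijections $\IBr(B')\to\IBr(b')$ and $\Irr(B')\to\Irr(b')$.

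Next, Fong's theorem (Theorem 5.5.16(ii) of \cite{NagaoTsushima}) ensures that $D$ is a defect group of $B'$, so the hypothesis $\Cent_G(D)\subseteq H$ makes $B:=(B')^G$ well-defined; by Lemma \ref{lem2_5}, $B$ covers $b$. Since $G=G[b]$, Proposition 3.4(ii) of \cite{HidaKoshitani} gives $B\downarrow_{N\times N}\cong n\times b$ for some integer $n\geq 1$. Once $n=1$ is established, Theorem 4.1 of \cite{HidaKoshitani} delivers the desired bijections $\IBr(B)\to\IBr(b)$ and $\Irr(B)\to\Irr(b)$ via restriction, proving both (a) and (b).

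The main obstacle is showing $n=1$. I plan to compute the multiplicity of $b'$ as an indecomposable direct summand of $B\downarrow_{M\times M}$ in two ways. Via restriction through $N\times N$, the identity $B\downarrow_{M\times M}=n\times(b\downarrow_{M\times M})$ combined with Theorem 5.10.12(i) of \cite{NagaoTsushima} (which gives multiplicity exactly $1$ for $b'$ in $b\downarrow_{M\times M}$, since $b\mid kN$ and $\Cent_N(D)\subseteq M$) yields total multiplicity $n$. Via restriction through $H\times H$, the decomposition of Lemma \ref{lem2_3}(a) together with the arguments in the proof of Lemma \ref{lem2_3}(b) (using $\Cent_G(D)\subseteq H$) leaves only type-(i) block summands $B''$ of $B\downarrow_{H\times H}$ as possible contributors, each satisfying $(B'')^G=B$. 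The delicate final step is to show that $B'$ is the unique such summand and appears with multiplicity one in $B\downarrow_{H\times H}$, so that, combined with $B'\cong b'$ via restriction, the total multiplicity equals $1$, forcing $n=1$. The ordinary-character case is handled identically, replacing $\IBr$ by $\Irr$ and $\w\phi$ by $\w\chi$ throughout.
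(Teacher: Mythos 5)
Your opening steps are sound and match the paper's setup: $B':=\bl(\wthetap)$ covers $b'$, $H=H[b']$ by Lemma \ref{prop_cor3_3}, $p\nmid|H/M|$, so Lemma \ref{IsomorphicBlock} applied to $M\lhd H$ gives $B'{\downarrow}_{M\times M}\cong b'$, and $B:=(B')^G$ is defined and covers $b$ by Lemma \ref{lem2_5}. But the step you yourself flag as ``delicate'' is not a loose end one can expect to tie up by mirroring Lemma \ref{prop3_9} --- it is the entire content of the lemma, and the mirrored argument is circular. In Lemma \ref{prop3_9} the uniqueness of the block of $H$ over $b'$ inducing to $B$ is deduced \emph{from} the already established isomorphism $B{\downarrow}_{N\times N}\cong b$ (i.e.\ from $n=1$ on the $G$-side, which there comes for free from the hypothesis $\wtheta_N\in\IBr(b)$). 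Here $n=1$ on the $G$-side is precisely what you are trying to prove, so you cannot invoke that uniqueness argument first. Your two-way count only yields $n=1+\sum_{B''}n''$, summed over the \emph{other} blocks $B''\in\Bl(H\mid b')$ with $(B'')^G=B$ (each contributing $n''\geq 1$ by Proposition 3.4(ii) of \cite{HidaKoshitani}); this gives $n\geq 1$, not $n=1$. Nor can you bound the multiplicity of $b'$ in $B{\downarrow}_{M\times M}$ from above by $1$ via Theorem 5.10.12(i) of \cite{NagaoTsushima} applied to $M\leq H$ or $M\leq G$, because the hypothesis only gives $\Cent_N(D)\subseteq M$, not $\Cent_H(D)\subseteq M$; indeed the multiplicity of $b'$ in $kH{\downarrow}_{M\times M}$ is in general $|H:M|>1$.

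The paper takes a different (much shorter) route: it applies Lemma \ref{IsomorphicBlock} directly to the pair $(b,B)$ for $N\lhd G$ and quotes \cite[Theorem 4.1(5) and (12)]{HidaKoshitani}; the point is to get the natural Morita equivalence between $B$ and $b$ of degree one, after which both bijections are formal. To rescue your approach you need a genuinely global ingredient rather than a local uniqueness claim. One way: by K\"ulshammer's theorem each $B_i\in\Bl(G\mid b)$ satisfies $B_i{\downarrow}_{N\times N}\cong n_i\times b$ with $\sum_i n_i^2=|G/N|$ (compare dimensions in $e_bkG=\bigoplus_i B_i$), and likewise $\sum_{B''\in\Bl(H\mid b')}(n'')^2=|H/M|=|G/N|$. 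Your two-way count, carried out for \emph{every} $B_i$ simultaneously, gives $n_i=\sum_{(B'')^G=B_i}n''$, whence $|G/N|=\sum_i n_i^2\geq\sum_{B''}(n'')^2=|G/N|$ with equality forcing each fibre of $B''\mapsto(B'')^G$ to be a singleton and $n_i=n''$; since $n'=1$ for $B'$, this yields $n=1$. Without such a counting argument (or an independent construction of some $\wchi\in\Irr(B)$ or $\wphi\in\IBr(B)$ restricting irreducibly to $N$, which would let Lemma \ref{IsomorphicBlock} apply as in the paper), your proof is incomplete at its decisive point.
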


\begin{proof}
Set $B' := \bl(\w\theta') \in\Bl(H)$. Clearly, $B'$ covers $b'$.
Furthermore $B := (B')^G$ is defined and 
by Lemma \ref{lem2_5}, $B$ covers $b$.
Thus, Lemma \ref{IsomorphicBlock} applies, so the assertion
follows from \cite[Theorem 4.1(5) and (12)]{HidaKoshitani}.
\end{proof}

Finally we prove Theorem \ref{thm1_3},
from which we later deduce Theorems \ref{thmA} and \ref{thm_gen_HK}. 
If $N\lhd G$ and $b\in\Bl(N)$ with $G[b]=G$, 
we can predict whether a character of 
$b$ has an extension to $G$ lying in a specific block by considering characters of local subgroups. 
In the situation where the defect groups of $b$ and $b'$ coincide 
Theorem 12.3 and Corollary 12.6 of \cite{DadeBlockExtensions} 
proves that the extensibility question is determined locally. 

\renewcommand{\proofname}{\bf Proof of Theorem \ref{thm1_3}(a)(1) and (b)(1)}
\begin{proof}
We first prove that there exists a character 
$\chi\in\Irr(b)$ of height zero that extends to 
some $J_0$ with $N\leq J_0\leq G$ and $J_0/N\in\Syl_p(G/N)$. 
Afterwards (a)(1) is a consequence of (b)(1), 
that we verify below. Notice that $G=NH$, according to Lemma 
\ref{KueWatanabe}(c). 

Some defect group $\w D$ of $\bl(\wchip)^G$ satisfies 
$D\leq \w D$, see Theorem (9.24) of \cite{Navarro}. 
Since $\wchip$ is an extension of $\chip$, the group $DM/M$ 
is a Sylow $p$-subgroup of 
$H/M$, see Proposition 2.5(d) of \cite{NavarroSpaeth}. 
Since $G=NH$, this implies that 
$DN/N= \w D N/N$ is a Sylow $p$-subgroup of $G/N$.

Let $c\in\Bl(N\w D)$ be the unique block covering $b$, 
see Corollary (9.6) of \cite{Navarro}. 
According to Problem (9.4) of \cite{Navarro}, 
$\w D$ is a defect group of $c$. 
Let $\mu\in\Irr(c)$ be a  height zero character 
and $\chi\in\Irr(N\mid \mu)$. Then by Corollary (9.18) 
of \cite{Navarro} the multiplicity of $\chi$ in $\mu_N$ 
is not divisible by $p$, and $\chi$ is invariant 
in $\w DN$. Together with Exercise (6.7) of \cite{Isa} 
this proves that $\mu$ is an extension of $\chi$. 
Hence there exists a character $\chi\in\Irr(b)$ 
with height zero that extends to $\w DN$.

In the following let $\chi\in\Irr(b)$ be a character 
that extends to $\w DN$. 
Let $J$ be any subgroup with $N\leq J\leq G$ 
and $p\nmid |J/N|$. According to Lemma \ref{prop3_10} 
there exists a unique character $\kappa^{(J)}\in\Irr(J)$, 
an extension of $\chi$ such that 
$\bl(\kappa^{(J)})=\bl(\wchip_{H\cap J})^J$. 
Note that by their definition for every $x\in H$ the characters 
$\kappa^{(J)}$ and $\kappa^{(J^x)}$ satisfy
\begin{align}\label{eq_kappa_def}
\kappa^{(J)} (j)= \kappa^{(J^x)} (j^x) \text{ for every } j 
\in J.\end{align}
Furthermore according to Lemma \ref{lem2_6_r_eineRichtung}, 
$\kappa^{(J)}$ satisfies 
$\bl((\kappa^{(J)})_{\spann<N,j>})=
\bl(\wchip_{H\cap \spann<N,j>})^{\spann<N,j>} $ 
for every $j\in J$. By the uniqueness in 
Lemma \ref{prop3_10} this implies 
\begin{align}\label{eq_kappa}
(\kappa^{(J)})_{\spann<N,j>}
= \kappa^{(\spann<N,j>)}\text{ for every }j\in J.
\end{align}
This implies that $\chi$ extends to $G$ 
since for every prime $\ell$ the character 
$\chi$ extends to some group $K$ with 
$K/N\in\Syl_{\ell}(G/N)$, see Corollary (11.31) of \cite{Isa}. 
Let $\theta\in\Irr(G)$ 
be an extension of $\chi$ to $G$. 

For every $g\in G^0$ we define a unique character 
$\epsilon^{(g)}\in\Irr(\spann<N,g>)$ by
\[ \kappa^{(\spann<N,g>)}= \theta_{\spann<N,g>} \epsilon^{(g)} .\]
By this definition $\epsilon^{(g)}$ is a character of 
$G/N$. Now let $\epsilon: G \rightarrow \CC$ be 
the map with $ \epsilon(g):=\epsilon^{(g_{p'})}(g_{p'})$ 
for every $g\in G$, where $g_{p'}$ is the $p'$-part of $g$. 

In the following we verify several properties of $\epsilon$ 
in order to apply Brauer's characterization of characters later
and prove thereby that $\epsilon$ is a linear character of $G$. 
Note that by definition $\epsilon$ is constant on $N$-cosets, 
and  by definition $\epsilon(g)$ is a root of unity for every 
$g\in G$, since $\epsilon^{(g)}$ is a linear character. Hence 
$\epsilon$ has norm $1$.

Now we compare $\epsilon(g)$ and $\epsilon(g^x)$ for 
$g,x\in G$. Since $\epsilon$ is constant on $N$-cosets, we 
may assume that $g, x \in H$. Now $\epsilon^{(g)}$ and 
$\epsilon^{(g^x)}$ are uniquely defined by equations 
\[ 
\kappa^{(\spann<N,g>)}
= \theta_{\spann<N,g>} \epsilon^{(g)} \text{ and }
\kappa^{(\spann<N,g^x>)}= \theta_{\spann<N,g^x>} \epsilon^{(g^x)}.
\]
Since $\theta\in\Irr(G)$, we see from (\ref{eq_kappa_def}) that 
$\epsilon^{(g^x)}(g^x)=\epsilon^{(g)}(g)$. Accordingly 
$\epsilon$ is a class function of $G$. 

Let $E$ be an elementary subgroup of $G$, hence it is 
a direct product of a $p$-group and a $p'$-group 
$E_{p'}$. By the definition of $\epsilon$ the map 
$\epsilon_E$ is a character if and only of $\epsilon_{E_{p'}}$ 
is a character. Hence we may assume that $E$ is a $p'$-group. 
For every $e\in E$, $\epsilon^{(e)}\in \Irr(\spann<N,e>)$ is defined by
\[ \kappa^{(\spann<N,e>)}=\theta_{\spann<N,e>} \epsilon^{(e)}.\]
Because of (\ref{eq_kappa})  $\epsilon^{(e)}$ satisfies also 
\[ (\kappa^{(\spann<N,E>)})_{\spann<N,e>}
=\theta_{\spann<N,e>} \epsilon^{(e)}.\]
Accordingly $\epsilon_E$ coincides with  
a linear character  $\zeta\in\Irr(\spann<N,E>)$ 
given by $\kappa^{(\spann<N,E>)}= \theta_{\spann<N,E>}\zeta$ . 

Applying now Brauer's characterization of characters 
known from Corollary (8.12) of \cite{Isa} we see that 
$\epsilon$ is a linear character of $G$. Accordingly 
$\w\chi:=\theta\epsilon$ is a well-defined character. 
Since $\epsilon(1)=1$ by definition, $\w\chi$ is an 
extension of $\chi$, and $\w \chi$ satisfies 
\[ \bl(\w \chi_{\spann<N,g>})
=\bl(\wchip_{\spann<N,g>\cap H})^{\spann<N,g>} \text{ for every } g\in G.\]
According to Lemma \ref{lem2_7} this implies 
\begin{align}
 \bl(\w \chi_{J})&=\bl(\wchip_{J\cap H}) 
\text{ for every } J \text{ with } N\leq J\leq G.
\qedhere
\end{align}
\end{proof}

\renewcommand{\proofname}{\bf Proof of Theorem \ref{thm1_3}(c)(1)}
\begin{proof}
Notice that $G=NH$ according to Lemma \ref{KueWatanabe}(c), 
and $H[b']=H$ by Lemma \ref{prop_cor3_3}.

Let $\phi\in\IBr(b)$. According to Lemma \ref{KueWatanabe}(a), 
$\phip$ is $H$-invariant. Let $J$ be an elementary subgroup with 
$N\leq J\leq G$ and $p\nmid |J/N|$. According to Lemma \ref{prop3_10} 
there exists a unique character $\kappa^{(J)}\in\IBr(J)$, 
extension of $\phi$ such that $\bl(\kappa^{(J)})=\bl(\wphip_{H\cap J})^J$. 
Note that by the definition,
for every $x\in H$ the characters $\kappa^{(J)}$ and 
$\kappa^{(J^x)}$ satisfy  
\begin{align}\label{eq_kappa_def_IBr}
\kappa^{(J)} (j)= \kappa^{(J^x)} (j^x) \text{ for every } j \in J.
\end{align}
Furthermore according to Lemma \ref{lem2_6_r_eineRichtung}, 
$\kappa^{(J)}$ satisfies 
$\bl((\kappa^{(J)})_{\spann<N,j>})
=\bl(\wphip_{H\cap \spann<N,j>})^{\spann<N,j>} $ for every $j\in J$. 
By the uniqueness in Lemma \ref{prop3_10} this implies 
\begin{align}\label{eq_kappa_IBr}
(\kappa^{(J)})_{\spann<N,j>}= \kappa^{(\spann<N,j>)}\text{ for every }j\in J.
\end{align}
According to Theorem (8.29) of \cite{Navarro} 
this implies that $\phi$ extends to $G$, since for every 
prime $\ell\neq p$ the character $\phi$ extends to some group 
$K$ with $N\leq K\leq G$ and $K/N\in\Syl_{\ell}(G/N)$. 
Let $\theta\in\IBr(G)$ be an extension of $\phi$ to $G$. 

For an ordinary character $\mu$ we denote by $\mu^0$ its 
restriction to the $p$-regular elements. For every $g\in G^0$ 
we let $\epsilon^{(g)}\in\Irr(\spann<N,g>)$ be the unique 
character with $N\leq \ker(\epsilon^{(g)})$ and 
\[ \kappa^{(\spann<N,g>)}= \theta_{\spann<N,g>} (\epsilon^{(g)})^0 .\]
Note that $\spann<N,g>/N$ is a $p'$-group, and hence every 
$\mu\in\IBr(\spann<N,g>/N)$ coincides with $\nu^0$ for  
a unique character $\nu\in\Irr(\spann<N,g>/N)$. By the 
definition, $\epsilon^{(g)}$ is uniquely defined. Again 
let $\epsilon: G \rightarrow \CC$ by 
$\epsilon(g):=\epsilon^{(g_{p'})}(g_{p'})$ for every $g \in G$,
where $g_{p'}$ is the $p'$-part of $g$. 

Like before we now verify that $\epsilon\in\Irr(G)$. 
The definition implies again that $\epsilon$ has norm $1$.

The considerations above prove again that $\epsilon$ 
is a class function of $G$, namely, $\epsilon(g)=\epsilon(g^x)$ 
for every $g,x\in G$. 
Since $\epsilon$ can be seen as a character of $G/N$,
we may assume that $g,x \in H$. Now $\epsilon^{(g)}$ and 
$\epsilon^{(g^x)}$ are uniquely defined by the following equations 

\[ \kappa^{(\spann<N,g>)}
= \theta_{\spann<N,g>} (\epsilon^{(g)})^0 \text{ and }
\kappa^{(\spann<N,g^x>)}= \theta_{\spann<N,g^x>} (\epsilon^{(g^x)})^0.
\]
Recall that $\theta\in\IBr(G)$. Because of (\ref{eq_kappa_def_IBr}) 
we see that $\epsilon^{(g^x)}(g^x)=(\epsilon^{(g)})(g)$. 

Next we want to show that $\epsilon_E$ is a character for every elementary 
subgroup $E\leq G$. Like before we may assume by the definition of 
$\epsilon$ that $p\nmid |E|$. 
For every $e\in E$, $\epsilon^{(e)}\in \Irr(\spann<N,E>)$ 
is defined by 
$\kappa^{(\spann<N,e>)}=\theta_{\spann<N,e>} (\epsilon^{(e)})^0$, 
and because of (\ref{eq_kappa_IBr})  $\epsilon^{(e)}$ satisfies also 
$ (\kappa^{(\spann<N,E>)})_{\spann<N,e>}
=\theta_{\spann<N,e>} (\epsilon^{(e)})^0$. Accordingly 
$\epsilon_E$ is a linear character $\zeta\in\Irr(\spann<N,E>/N)$ 
that satisfies  $\kappa^{(\spann<N,E>)}= \theta_{\spann<N,E>}\zeta^0$. 

By \cite[(8.12)]{Isa} we see that $\epsilon\in\Irr(G)$. Hence 
$\w\phi:=\theta\epsilon^0\in\IBr(G)$ is a well-defined extension 
of $\phi$ with 
\[ \bl(\w \phi_{\spann<N,g>})
=\bl(\wphip_{\spann<N,g>\cap H})^{\spann<N,g>} 
\text{ for every } g\in G^0.\]
According to Lemma \ref{lem2_7} this implies 
\begin{align}
 \bl(\w \phi_{J})&=\bl(\wphip_{J\cap H}) 
\text{ for every } J \text{ with } N\leq J\leq G. \qedhere
\end{align}
\end{proof}

Slightly altering the proof we can prove Theorem 
\ref{thm1_3}(a)(2), (b)(2) and (c)(2).

\renewcommand{\proofname}
{\bf Proof of Theorem \ref{thm1_3}(a)(2), (b)(2) and (c)(2)}
\begin{proof}
We only sketch here a proof since it is very similar to 
the proofs given before.
The block $b'$ is invariant in $H$ since
$H[b']=H$ by Lemma \ref{prop_cor3_3}. 
Like in the proof of Theorem \ref{thm1_3}(a)(1) and (b)(1),  
we see that a character in $\Irr(b')$ with height zero extends 
to $J_0\cap H$. 
The remaining considerations of the proofs 
of Theorem \ref{thm1_3}(a)(1), (b)(1) and (c)(1), can be applied 
with only small changes and prove the statement. 
\end{proof}

\section{Maps between characters and blocks}
The extensions constructed in Theorem 
\ref{thm1_3}  
give rise to several character correspondences. 
We restrict ourselves to giving here one example. 
Besides the existence of the extensions, 
the equations $(*)$ and $(**)$ describe relationships between
blocks of restricted characters.
This technical property plays a key role 
in the proof of the following statement. 
It allows the application of 
Proposition 3.6 of \cite{Spaeth_red_BAW}, and this 
in turn gives the control on the blocks considered. 

In the second part of this section we deduce from the character correspondence 
a generalization of the Harris-Kn\"orr theorem. Since we are 
considering a more general situation, the obtained map 
is only surjective but not bijective in general. 
Furthermore we also loose 
information on the size and structure of defect groups of 
the considered blocks.

\renewcommand{\proofname}{\bf Proof of Theorem \ref{thmA}}
\begin{proof}
Note that since $G=G[b]$, Lemma \ref{KueWatanabe}(c) 
implies $G=NH$.  According to Lemma \ref{KueWatanabe}(a) 
the character $\phi$ is $G$-invariant. Hence 
by the proof of Theorem 4.1 of \cite{NavarroSpaeth} 
there is a group $\wh G$ and a surjective homomorphism 
$\pi: \wh G\rightarrow G$ with cyclic kernel $Z$, 
$p\nmid |Z|$ and the following properties 
\begin{itemize}
\item $\wh N=N_0\times Z$ where $\wh N:=\epsilon^{-1}(N)$, 
$N_0$ is isomorphic to $N$ via $\pi_{N_0}: N_0\rightarrow N$ 
and $N_0\lhd \wh G$. Also the action of $\wh G$ on $N_0$ 
coincides with the action of $H$ on $N$ via $\pi$.
\item The character $\phi_0:=\phi\circ \pi_{N_0}\in\IBr(N_0)$ 
extends to $\wh G$, that is, there exists an extension 
$\w \phi_0\in\IBr(\wh G)$. 
\item The unique irreducible constituent of $(\w\phi_0)_Z$ is faithful
\end{itemize}
(Note that in \cite{NavarroSpaeth} one considers 
an ordinary character but the proof can be transferred 
to Brauer characters thanks to \cite[Theorem (8.14)]{Navarro}.) 
According to the proof the group $\wh G$ coincides with 
$G\times Z$ as set, and its multiplication is defined using a $2$-cocycle 
$\alpha: G/N\times G/N\rightarrow Z$. The group $
N_0$ coincides with $\{(n,1)\mid n\in N\}$.

Let $\wh H:=\{  (h,z) \mid h\in H,\,\, z\in Z\}\leq \wh G$ 
and  $M_0:=\{(m,1)\mid m\in M\}=N_0\cap \wh H$. Then $M_0$ 
is contained in $N_0$ and isomorphic to $M$ via $\pi_{M_0}$. 
Via the isomorphism $\pi_{M_0}$ the block $b'$ defines 
a unique block $b_0'\in\Bl(M_0)$ that satisfies 
$(b'_0)^{N_0}=\bl(\phi_0)$. Then $D_0:=\pi^{-1}(D)\cap N_0$ 
is a defect group of $b'_0$ and satisfies $\Cent_{\wh G}(D_0)\subseteq \wh H$. 
Since $\phi_0$ extends to $\wh G$, 
Theorem \ref{thm1_3}(c)(2) shows that there exist
$\phi'_0 \in\IBr(b'_0)$ and $\wphip_0\in\IBr(\wh H)$ 
 with $(\w\phi'_0)_M=\phi'_0$ and
\[\bl(\wphip_{0,\wh H\cap J})^{J}=\bl(\w\phi_{0,J})
\text{ for every }J \text{ with }\wh N\leq J\leq \wh G.\] 
Note that $\bl(\wphip_{0,\wh H\cap J})$ is admissible 
with respect to $J$ in the sense of \cite[p.212]{Navarro} 
since some defect group of $\bl(\wphip_{0,\wh H\cap J})$ 
contains $D_0$ by \cite[Theorem (9.26)]{Navarro} and 
$\Cent_J(D_0) \subseteq \wh H$ for every $J$ with $N \leq J \leq \wh G$. 

According to \cite[Corollary (8.20)]{Navarro} we have 
\[\IBr(\wh G\mid \phi_0)=
\{ \w\phi_0\eta\mid \eta\in\IBr(\wh G/N_0)\} 
\text{ and }\IBr(\wh H\mid \phi_0')
=\{ \wphip_0\eta\mid \eta\in\IBr(\wh H/M_0)\}.\]
Now Proposition 3.6 of \cite{Spaeth_red_BAW} proves that 
\[ \bl(\wphip_0\eta_H)^{\wh G}=\bl(\w\phi_0\eta) 
\text{ for every }\eta\in\IBr(\wh G)\text{ with } N_0\leq \ker(\eta).\] 
Accordingly the map 
\[\La_0: \IBr(\wh G\mid \phi_0) \longrightarrow 
\IBr(\wh H\mid \phip_0) \text{ given by } 
\w\phi_0\eta\longmapsto \wphip_0\eta_{\wh H} \]
is a bijection such that 
\[ \bl(\La_0(\rho_0))^{\wh G}=\bl(\rho_0) 
\text{ for every } \rho_0 \in \IBr(\wh G\mid \phi_0) .\]
Let $\nu\in\IBr(Z)$. Because of 
$p\nmid |Z|$, $\rho_0 \in \IBr(\wh G\mid \phi_0)$ is contained in 
$\IBr(\wh G\mid \nu)$ if and only if $\bl(\rho_0)$ covers 
$\bl(\nu)$. Now since $Z\leq \Z(\wh G)$, $\bl(\La_0(\rho_0))^{\wh G}$ 
covers $\bl(\nu)$ if and only if $\bl(\La_0(\rho_0))$ 
covers $\bl(\nu)$ according to the definition of induced 
blocks. This implies that $\La_0(\rho_0)\in \IBr(\wh H\mid \nu)$ 
for every $\rho_0 \in \IBr(\wh G\mid \phi_0)$. Altogether we have 
\[ \La_0(\IBr(\wh G\mid \phi_0) 
\cap \IBr(\wh G\mid \nu))\subseteq \IBr(\wh H\mid \nu) \text{ for every } 
\nu\in\IBr(Z).\]
Hence $\La_0$ induces a bijection 
$\La: \IBr( G\mid \phi) \longrightarrow \IBr( H\mid \phip)$. 
According to Proposition 2.4(b) 
of \cite{NavarroSpaeth} the bijection satisfies 
$\bl(\Lambda(\rho))^G=\bl(\rho)$ for every $\rho\in\IBr(G\mid \phi)$.
\end{proof}

We deduce from the correspondence provided by Theorem \ref{thmA}
the following variant of Theorem \ref{thm_gen_HK}.
We start by proving this statement in the case where additionally $G=G[b]$. 

\begin{lem}\label{prop4_5}
Let $N\lhd G$, $H\leq G$ and $M:=N\cap H$. 
Let $b'\in\Bl(M)$ be a block that has a defect group 
$D$ with $\Cent_G(D)\subseteq H$. Assume that $G=G[b]$ for $b:=(b')^N$. 
Then the map $\Bl(H\mid b') \rightarrow \Bl(G\mid b)$ 
given by $B'\mapsto (B')^G$ is surjective. 
\end{lem}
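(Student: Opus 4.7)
The idea is to reduce the surjectivity claim directly to the Brauer character bijection $\Lambda$ provided by Theorem \ref{thmA}. Given an arbitrary block $B\in\Bl(G\mid b)$, I will exhibit a preimage $B'\in\Bl(H\mid b')$ under $B'\mapsto (B')^G$ by extracting a suitable irreducible Brauer character of $B$, pushing it through $\Lambda$, and taking the block of the image.

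First I would set up the characters. Pick any $\phi'\in\IBr(b')$ and any $\rho\in\IBr(B)$. Because $B$ covers $b$, Clifford theory for Brauer characters (see \cite[(8.7) and (8.8)]{Navarro}) yields an irreducible constituent $\phi$ of $\rho_N$ lying in $\IBr(b)$, so that $\rho\in\IBr(G\mid\phi)$. Since $G=G[b]\subseteq G_b$, the character $\phi$ is automatically $G$-invariant by Lemma \ref{KueWatanabe}(a), so there is no ambiguity from conjugation among $N$-constituents.

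Next I would apply Theorem \ref{thmA} with this $\phi$ and $\phi'$. All of its hypotheses ($N\lhd G$, $H\leq G$, $M=N\cap H$, the defect condition $\Cent_G(D)\subseteq H$ on $b'$, $b=(b')^N$, and $G=G[b]$) are inherited verbatim from the present lemma. The theorem then supplies a bijection
\[
\Lambda:\IBr(G\mid\phi)\longrightarrow\IBr(H\mid\phi')
\]
satisfying $\bl(\Lambda(\tau))^G=\bl(\tau)$ for every $\tau\in\IBr(G\mid\phi)$. Set $B':=\bl(\Lambda(\rho))$. Since $\Lambda(\rho)\in\IBr(H\mid\phi')$ and $\phi'\in\IBr(b')$, the block $B'$ covers $b'$, i.e.\ $B'\in\Bl(H\mid b')$. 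Specialising the compatibility to $\rho$ itself gives $(B')^G=\bl(\rho)=B$, producing the required preimage.

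There is really no conceptual obstacle once Theorem \ref{thmA} is in hand; the only point worth flagging is that the induced block $(B')^G$ must be defined, but this is already built into the conclusion of Theorem \ref{thmA} (and is transparent here because some defect group of $B'$ contains $D$, and $\Cent_G(D)\subseteq H$). In short, the lemma is essentially a direct translation of the character-level bijection of Theorem \ref{thmA} into a block-level surjection via $\bl(\cdot)$ and $(\cdot)^G$.
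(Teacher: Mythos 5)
Your proposal is correct and takes essentially the same route as the paper: both arguments reduce the surjectivity to the bijection $\Lambda$ of Theorem \ref{thmA} together with the fact (Navarro, Theorem (9.2)) that every block in $\Bl(G\mid b)$ is $\bl(\rho)$ for some $\rho\in\IBr(G\mid\phi)$, and then take $B':=\bl(\Lambda(\rho))$. The only cosmetic difference is that the paper fixes $\phi\in\IBr(b)$ once and for all, while you choose $\phi$ as a constituent of $\rho_N$ for the given $B$; since $G=G[b]$ makes every $\phi\in\IBr(b)$ $G$-invariant, this changes nothing.
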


\renewcommand{\proofname}{Proof}
\begin{proof}
Let $\phi\in\IBr(b)$ and $\phip\in\IBr(b')$. 
According to Theorem (9.2) of \cite{Navarro} we have 
\begin{align*}
\Bl(G\mid b)=\{\bl(\rho)\mid \rho \in\IBr(G\mid \phi)\}\text{ and }&
\Bl(H\mid b')=\{\bl(\rho')\mid \rho' \in\IBr(H\mid \phip) \}.
\end{align*}
According to Theorem \ref{thmA} there exists a bijection 
 \[ \Lambda: \IBr(G\mid \phi)\longrightarrow \IBr(H\mid \phip),\]
 such that $\bl(\Lambda(\rho))^G=\bl(\rho)$ for every 
$\rho\in\IBr(G\mid \phi)$. But this implies the statement. 
\end{proof}

Now we use this for the generalization of the Harris-Kn\"orr correspondence. 

\renewcommand{\proofname}{\bf Proof of Theorem \ref{thm_gen_HK}}
\begin{proof}
We prove the statement by induction on $|G/N|$. 
Assume that $G_b\neq G$.

Then by induction there exists 
a surjective map between $\Bl(H\cap G_b\mid b') \rightarrow \Bl(G_b\mid b)$. 
According to Theorem (9.14)(a) of \cite{Navarro} there is a bijection 
between $\Bl(G_b\mid b)$ and $\Bl(G\mid b)$ with $B'\mapsto (B')^G$. 
Since $H_{b'}\leq H\cap G_b$ block induction defines  
a bijection between $\Bl(H\cap G_b\mid b')$ and $\Bl(H\mid b')$, see again Theorem (9.14)(a) of \cite{Navarro}.

We assume in the following that $G_b= G$. 
We know by Lemma \ref{prop4_5} that there exists a surjective map  
$\Bl(H\cap G[b]\mid b') \rightarrow \Bl(G[b]\mid b ) 
$ given by $c\mapsto c^{G[b]}$. 

For any block $C\in\Bl(G\mid b)$ there exists a unique block in $C_0\in\Bl(G[b]\mid b)$
with $C=(C_0)^G$ according to Theorem 3.5 of \cite{DadeBlockExtensions}, see also Theorem 3.5 of \cite{Murai_Dade}.
Further $C_0=(c_0)^{G[b]}$ for some $c_0\in\Bl(H\cap G[b]\mid b')$. 
Any block $c\in\Bl(H\mid c_0)\subseteq \Bl(H\mid b')$ now 
satisfies $c^{G}=C$ according to Lemma \ref{lem2_5}. 
Accordingly the considered map is surjective.
\end{proof}

The following example shows that the considered map 
is not injective in general.

\begin{example}
Let $T$ be the cyclic group of order $63$ generated by an 
element $x$. Let $\sigma$ be the automorphism of $T$ given by 
$x\mapsto x^4$ of order $3$. 
Let $G:=(T\times T)\rtimes \langle\sigma\rangle$,
where the action of $\sigma$ on $T\times T$ is given by $x\times x' \mapsto x^{\sigma}\times (x')^\sigma$.
And let $H:=T\times T$ and $N:= T\times 1 \lhd G$. 
Let $M:=N\cap H = N = T\times 1$, 
and let $b'\in\Bl(M)$ and $b\in\Bl(N)$ be the principal $3$-blocks. 
Then a defect group $D$ of $b'$ is a Sylow $3$-subgroup of $T$. 
The group theoretic assumptions hold in this situation. 
Then $\Bl(H\mid b')$ has seven $3$-blocks. 
On the other hand $\Bl(G\mid b)$ contains three $3$-blocks.
\end{example}

\noindent
{\bf Acknowledgement:} 
A part of this work was done while the first author was visiting
the Department of Mathematics, TU of Kaiserslautern
in October and December 2012. 
He is grateful to Gunter Malle for his kind hospitality.
The authors thank also the referee for several useful comments, especially on 
\cite{DadeBlockExtensions}.

\def\cprime{$'$}

\end{document}